\numberwithin{equation}{section}
\newtheorem{theorem}{Theorem}[section]
\newtheorem{proposition}[theorem]{Proposition}
\newtheorem{corollary}[theorem]{Corollary}
\theoremstyle{definition}
\newtheorem{remark}[theorem]{Remark}
\newtheorem{definition}[theorem]{Definition}
\DeclareMathOperator{\rank}{rank}
\DeclareMathOperator{\codim}{codim}
\DeclareMathOperator{\ada}{ada}
\DeclareMathOperator{\diag}{diag}
\DeclareMathOperator{\minimize}{minimize}
\newcommand{\n}[1]{ \left\|#1\right\| }
\newcommand{\R}{{\mathbb{R}}}
\newcommand{\pair}[2]{{\langle #1, #2 \rangle}}
\begin{document}

\title[Stability of low-rank matrix recovery]{\textbf{Stability of low-rank matrix recovery and its connections to Banach space geometry}}
\author{Javier Alejandro Ch\'avez-Dom\'{\i}nguez}
\address{Department of Mathematics,
University of Texas at Austin,
2515 Speedway Stop C1200,
Austin, TX 78712-1202.}
\curraddr{
Instituto de Ciencias Matem\'aticas,
CSIC-UAM-UC3M-UCM, C/Nicol\'as Cabrera, n$^\circ$~13-15,
Campus de Cantoblanco, UAM,
28049, Madrid, Spain.
}
\email{jachavezd@math.utexas.edu}

\author{Denka Kutzarova}
\address{Institute of Mathematics, Bulgarian Academy of Sciences, Sofia, Bulgaria.}
\curraddr{Department of Mathematics,
University of Illinois at Urbana-Champaign,
1409 W. Green Street Urbana, IL 61801.}
\email{denka@math.uiuc.edu}

\begin{abstract}
	There are well-known relationships between compressed sensing and the geometry of the finite-dimensional $\ell_p$ spaces.
	A result of Kashin and Temlyakov \cite{Kashin-Temlyakov} can be described as a characterization of the stability of the recovery of sparse vectors via $\ell_1$-minimization in terms of the Gelfand widths of certain identity mappings between finite-dimensional $\ell_1$ and $\ell_2$ spaces,
	whereas a more recent result of Foucart, Pajor, Rauhut and Ullrich \cite{Foucart-Pajor-Rauhut-Ullrich} proves an analogous relationship even for $\ell_p$ spaces with $p < 1$. 
	In this paper we prove what we call matrix or noncommutative versions of these results:
	we characterize the stability of low-rank matrix recovery via Schatten $p$-(quasi-)norm minimization
	in terms of the Gelfand widths of certain identity mappings between finite-dimensional Schatten $p$-spaces.
\end{abstract}

\maketitle

\section{Introduction}

A mathematical problem that appears often in real-world situations is the following: we wish to recover a high-dimensional vector $x \in \R^N$ from a measurement $Ax$ where $A : \R^N \to \R^m$ is a linear map and $m$ is smaller than $N$. As stated the problem of course cannot be solved, but that changes if we have the additional condition that the unknown vector $x$ is \emph{sparse}, i.e. it has a small number of non-zero coordinates.
This is the subject matter of compressed sensing, a very active area of research with numerous applications; the book \cite{Foucart-Rauhut} is a recent comprehensive reference.
Formally, this \emph{sparse recovery problem} can be stated as
\begin{equation}\label{sparse-recovery-problem}
	\min \n{x}_0 \quad \text{subject to } Ax = y,
\end{equation}
where $\n{\cdot}_0$ represents the number of nonzero coordinates of a vector.
This is an NP-hard \cite{Natarajan} and non-convex problem, so we are interested in conditions (especially on the map $A$) that would allow us to solve an easier problem and still arrive to the right solution.
In that spirit, a basic technique in compressed sensing is that of $\ell_1$-minimization: if the vector $x$ is sparse enough, then minimizing $\n{x'}_{\ell_1}$ over all vectors $x'$ so that $Ax' = Ax$ actually allows us to recover $x$. Formally, instead of problem \eqref{sparse-recovery-problem} we are considering its \emph{convex relaxation}
\begin{equation}\label{min-ell1-problem}
	\min \n{x}_{\ell_1} \qquad \text{subject to } Ax = y.
\end{equation}
Aditionally, one can consider the analogous problem of $\ell_p$-minimization.

In practice the unknown vectors are not necessarily sparse, but are close to sparse ones. Thus for any method of recovery it is of utmost importance to investigate its stability, that is, having a control on the distance between the original vector and its reconstruction in terms of the distance from the original vector to the sparse vectors.
It turns out that the stability of sparse vector recovery through $\ell_p$-minimization has connections to the Banach-space geometry of finite-dimensional $\ell_p$-spaces.
More generally, it is known that there are connections between recovery -- in particular the compressed sensing model -- and geometric quantities called \emph{Gelfand widths}, see e.g. \cite{Novak,Donoho,Cohen-Dahmen-DeVore,Kashin-Temlyakov}.

In many practical situations, there is extra structure in the space of unknown vectors.
A good example is the famous \emph{matrix completion problem} (also known as the \emph{Netflix problem}), where the unknown is a matrix and the measurement map gives us a subset of its entries.
In this case sparsity gets replaced by the more natural condition of having low rank, and the last few years have witnessed an explosion of work in this area.
In what follows, $M_N$ will denote the space of $N \times N$ real-valued matrices.
We now consider a linear operator $\mathcal{A} : M_N \to \R^m$, and a fixed vector $y \in \R^m$.
The low-rank recovery problem can thus be stated as the problem of finding the solution to
\begin{equation}\label{low-rank-problem}
	\min \rank(X) \qquad \text{subject to } \mathcal{A} X = y.
\end{equation}
This is again an NP-hard problem, so once again we would like to replace it by another one which is simpler to solve but has the same solution.

In noncommutative functional analysis the Schatten $p$-spaces are usually considered to be the counterparts of the classical $\ell_p$ spaces (recall that the Schatten $p$-norm of a matrix $X$ is the $\ell_p$-norm of its vector of singular values), so from that point of view it is natural to wonder whether the Schatten $p$-norm minimization approach can work in the matrix context.
We would like to consider operators $\mathcal{A}$ for which the previous problem is equivalent to
\begin{equation}\label{min-nuclear-problem}
	\min \n{X}_{S_1} \qquad \text{subject to } \mathcal{A}X = y.
\end{equation}
Where $\n{X}_{S_p}$ denotes the Schatten $p$-norm of the matrix $X \in M_N$.
This has already been studied in several situations of interest, with the idea going back to the Ph.D. thesis of M. Fazel \cite{Fazel-thesis}.
Schatten $1$-norm (also known as nuclear norm) minimization in the particular case  of the matrix completion problem was studied by Cand\`es and Recht \cite{Candes-Recht} (and later on Cand\`es and Tao \cite{Candes-Tao} gave optimality results quantifying the minimum number of entries needed to recover a matrix of low rank exactly by any method whatsoever, and showed that nuclear norm minimization is nearly optimal).
Plenty of concepts from the classical theory of compressed sensing have found matrix counterparts:
Cand\`es and Recht \cite{Candes-Recht} use the idea of \emph{coherence};
Recht, Fazel and Parrilo \cite{Recht-Fazel-Parrilo} used the matrix version of the \emph{restricted isometry property} \cite{Recht-Fazel-Parrilo};
whereas both Recht, Xu and Hassibi \cite{Recht-Xu-Hassibi} and Fornasier, Rauhut and Ward \cite{Fornasier-Rauhut-Ward}
consider \emph{null-space conditions};
the \emph{spherical section property} was used by Dvijotham and Fazel \cite{Dvijotham-Fazel} and
Oymak, Mohan, Fazel and Hassibi \cite{Oymak-Mohan-Fazel-Hassibi}.

One thing that does not appear to have been explicitly studied in the matrix context is the aforementioned relationship to Gelfand widths.
Recall that the Gelfand $k$-width of a subset $K$ of a normed space $E$ is defined as
$$
d^k(K,E) := \inf \left\{  \sup_{x \in K \cap L} \n{x}_E \; : \; L \text{ subspace of } E \text{ with } \codim(L) \le k \right\}.
$$
A closely related concept that is more commonly used in Banach space theory is that of a \emph{Gelfand number}:
if $T : X \to Y$ is a linear operator between normed spaces, its $k$-Gelfand number is defined by
\begin{align*}
	c_k(T) &:= \inf\left\{ \sup_{x \in L, \n{x}\le 1 } \n{Tx} \; : \; L \text{ subspace of } X \text{ with } \codim(L) < k \right\}.
\end{align*}
The speed of convergence to zero of the sequence of Gelfand numbers $(c_k(T))_{k=1}^\infty$ is a measure of the compactness of the operator $T$, and is an example of a sequence of $s$-numbers; see \cite{Pietsch-s-numbers,Konig-eigenvalues} for more details.
In the cases under consideration in this paper the concepts of Gelfand numbers and Gelfand widths actually coincide (up to a small shift in the index), so we will freely use them both depending on the particular context.
It should be mentioned that there is a general concept of Gelfand width for a linear map that is not always the same as the corresponding Gelfand number (see \cite[Sec. 6.2.6]{Pietsch-history} for the details),
but both concepts do coincide in nice situations (see \cite{Edmunds-Lang}).

The work of Kashin and Temlyakov \cite{Kashin-Temlyakov} made more precise the already-known connection between compressed sensing and the Kashin-Garnaev-Gluskin \cite{Kashin,Garnaev-Gluskin} result that calculates the $m$-Gelfand numbers of the identity map from $\ell_1^N$ to $\ell_2^N$, namely
$$
c_{m+1}(id : \ell_1^N \to \ell_2^N) \le C \sqrt{ \frac{1+\log(N/m)}{m} }.
$$
In a nutshell, the main result of Kashin and Temlyakov shows that the stability of sparse recovery via $\ell_1$-minimization is equivalent to the kernel of the measurement map being a ``good'' subspace where the Gelfand number of a certain order is achieved.
This idea was taken further by Foucart, Pajor, Rauhut and Ullrich \cite{Foucart-Pajor-Rauhut-Ullrich}, who used compressed sensing ideas to calculate the  Gelfand numbers of identity maps from $\ell_p^N$ to $\ell_q^N$ for $0 < p \le 1$, $p < q \le 2$.

In this paper, we prove matrix versions of the aforementioned theorems, relating the stability of low-rank matrix recovery to the Gelfand numbers of identity maps between finite-dimensional Schatten $p$-spaces.
As far as we know the only part of our results that is already written down in the literature is the following analogue of the Kashin-Garnaev-Gluskin result due to Carl and Defant \cite[p. 252]{Carl-Defant-tensor-products}, namely the calculation of the $m$-Gelfand numbers of the identity map from $S_1^N$ to $S_2^N$: for $1 \le m \le N^2$,
$$
c_m(id : S_1^N \to S_2^N) \asymp \min\left\{1, \frac{N}{m} \right\}^{1/2}.
$$
Here and in the rest of the paper, the symbol $\asymp$ means that the quantities on the left and the right are equivalent up to universal constants. If we want to emphasize the dependance of the constants on some parameters, those will appear as subindices of the equivalence symbol ($\asymp_{p,q}$, for example).

The rest of this paper is organized as follows.
In section \ref{sec-preliminaries} we introduce our notation and state several known results that will be needed in the sequel.
In section \ref{sec-KT} we show the first relationships between the stability of low-rank matrix recovery and the geometry of Banach spaces, by proving a matrix version of the Kashin-Temlyakov theorem. 
Section \ref{sec-FL} contains a technical result, a matrix version of the main theorem from \cite{Foucart-Lai} that gives conditions on the measurement map $\mathcal{A}$ that guarantee the stability of the Schatten $p$-minimization scheme.
A very similar theorem was recently obtained independently by Liu, Huang and Chen \cite{Liu-Huang-Chen},
though our proof is different and we require a weaker hypothesis.
In the final section, the technical result from Section \ref{sec-FL} is used to calculate the Gelfand numbers of the identity maps from $S_p^N$ to $S_q^N$ for $0 < p \le 1$, $p < q \le 2$ in the spirit of the work of Foucart, Pajor, Rauhut and Ullrich.

\section{Notation and preliminaries}\label{sec-preliminaries}

In this paper we will only consider square matrices, but all the results can be adapted to rectangular ones.
For $p>0$ we will denote by $S_p^N$ the space of
$N \times N$ matrices with the Schatten $p$-quasi-norm, given by 
$$
\n{X}_{S_p} = \Big(\sum_{i=1}^N |\sigma_i|^p\Big)^{1/p},
$$
where $(\sigma_i)_{i=1}^N$ is the vector of singular values of the matrix $X$.
Similarly, $S_{p,\infty}^N$ will denote the space of
$N \times N$ matrices with the weak-Schatten-$p$-quasi-norm given by
$$
\n{X}_{S_{p,\infty}} = \max_{1 \le k \le N} k^{1/p}|\sigma^*_k|
$$
where $(\sigma^*_i)_{i=1}^N$ is the non-increasing rearrangement of $(\sigma_i)_{i=1}^N$.
For any quasi-normed space $X$, $B_X$ will denote its unit ball.

We will need to consider the best $s$-rank approximation error in the Schatten $p$-quasi-norm,
$$
\rho_s(X)_{S_p} := \inf \big\{ \n{ X - Y }_{S_p} \; : \; \rank(Y) \le s \big\}.
$$ 
It is well known that the infimum is actually attained at the $s$-spectral truncation $Y =X_{[s]}$
(that is, keeping only the $s$ largest singular values in the singular value decomposition).

Given a linear map $\mathcal{A} : M_N \to \R^m$ and a vector $y \in \R^m$, for $0 < p \le 1$ we will denote by $\Delta_p(y)$ a solution to
$$
\minimize \n{Z}_{S_p} \quad \text{ subject to } \mathcal{A}Z = y.
$$
That is, $\Delta_p$ is the Schatten $p$-quasi-norm minimization reconstruction map. The map $\Delta_p$ of course depends on the measuring map $\mathcal{A}$, but for simplicity we do not make this dependence explicit in the notation.

\subsection{The Restricted Isometry Property}

The Restricted Isometry Property (RIP) for a linear map $A :\R^N \to \R^m$ was introduced by Cand\`es and Tao \cite{Candes-Tao-RIP}, and
quickly became a key concept in the analysis of sparse recovery via $\ell_p$-norm minimization. The $s$-order restricted isometry constant of such a map is the smallest $\delta > 0$ such that for every vector $x \in \R^N$ of sparsity at most $s$,
$$
(1-\delta) \n{x}^2_{\ell_2} \le \n{Ax}^2_{\ell_2} \le (1+\delta) \n{x}^2_{\ell_2}.
$$
The importance of the RIP stems from the fact that small restricted isometry constants imply exact recovery via $\ell_p$-quasi-norm minimization for $0 < p \le 1$, 
and it should be noted that it is well known that random choices of the matrix $A$ give small RIP constants of order $s$, as long as $m$ is at least of the order of $s \ln(e N /s)$ \cite{Candes-Tao-NearOptimal,Baraniuk-Davenport-DeVore-Wakin,Mendelson-Pajor-Tomczak}.

The version of the RIP for matrix recovery was introduced by Recht, Fazel and Parrilo \cite{Recht-Fazel-Parrilo}, and is as follows:
a linear map $\mathcal{A} : M_N \to \R^m$ is said to have the Restricted Isometry Property of rank $s$ with constant $\delta>0$ if for every matrix $Z \in M_N$ of rank at most $s$,
$$
(1-\delta) \n{Z}^2_{S_2} \le \n{\mathcal{A}Z}^2_{\ell_2} \le (1+\delta) \n{Z}^2_{S_2}.
$$
The best such constant is denoted by $\delta_s(\mathcal{A})$.

Just as in the vector case, random constructions give small RIP constants.
The next result follows from \cite[Thm. 2.3]{Candes-Plan}, and will be very important for us in the sequel.

\begin{theorem}\label{thm-existence-RIP-matrices}
	Given a prescribed $\delta\in(0,1)$, there is a constant $C_\delta$ such that if the entries of the map $\mathcal{A}$ (seen a matrix with respect to the canonical bases in $M_N$ and $\R^m$) are independent gaussians with mean zero and variance $1/m$, then with positive (even overwhelming) probability $\delta_s(\mathcal{A}) \le \delta$ holds provided that 
	\begin{equation}\label{mRIP}
		m \ge C_\delta sN.
	\end{equation}
\end{theorem}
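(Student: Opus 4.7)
The plan is to derive this as a reasonably direct consequence of Candès--Plan's result \cite[Thm.~2.3]{Candes-Plan}, which establishes concentration of $\|\mathcal{A}Z\|_{\ell_2}^2$ around $\|Z\|_{S_2}^2$ uniformly over rank-$s$ matrices for Gaussian measurement ensembles. If one prefers a self-contained argument, the standard template for proving RIP of a Gaussian map carries over to the matrix setting essentially unchanged; I would lay it out in three steps.

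First, a \emph{pointwise concentration} step. For any fixed $Z \in M_N$, the entries of $\mathcal{A}$ being i.i.d.\ $\mathcal{N}(0,1/m)$ make $\|\mathcal{A}Z\|_{\ell_2}^2$ a scaled chi-squared random variable with $m$ degrees of freedom and mean $\|Z\|_{S_2}^2$. Standard Hanson--Wright / Bernstein-type tail bounds then give
\[
\Pr\Big[\,\big|\,\n{\mathcal{A}Z}_{\ell_2}^2 - \n{Z}_{S_2}^2 \,\big| > t \n{Z}_{S_2}^2 \,\Big] \le 2 \exp\!\big(-c\, m\, t^2\big)
\]
for $t \in (0,1)$, with $c$ an absolute constant.

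Second, a \emph{covering} step. Let $R_s := \{Z \in M_N : \rank(Z) \le s,\; \n{Z}_{S_2}\le 1\}$. Writing $Z = U\Sigma V^\top$ with $U,V \in \R^{N \times s}$ having orthonormal columns and $\Sigma$ an $s\times s$ diagonal matrix with $\n{\diag(\Sigma)}_{\ell_2}\le 1$, one can combine $\eta/3$-nets for the two Stiefel manifolds and the Euclidean ball to produce an $\eta$-net $\mathcal{N}_\eta \subset R_s$ (in Schatten-$2$) of cardinality at most $(C_0/\eta)^{(2N+1)s}$ for some absolute $C_0$. This is the key geometric input and reflects that the effective dimension of $R_s$ is $\Theta(sN)$.

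Third, a \emph{union bound and pull-through} step. Applying the pointwise bound with $t = \delta/2$ to every element of $\mathcal{N}_\eta$ for suitably small $\eta$ (say $\eta=\delta/8$), the failure probability is at most $2(C_0/\eta)^{(2N+1)s}\exp(-c m \delta^2/4)$, which is exponentially small as soon as $m \ge C_\delta s N$ for an appropriate $C_\delta$. A standard approximation argument (bound the supremum of $\bigl|\n{\mathcal{A}Z}_{\ell_2}^2 - \n{Z}_{S_2}^2\bigr|$ over $R_s$ by the supremum over the net plus cross-terms controlled by the supremum itself, then absorb) then upgrades the inequality from the net to all of $R_s$, yielding $\delta_s(\mathcal{A}) \le \delta$.

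The main obstacle, and the only place where the matrix case differs substantively from the vector case, is the covering estimate for $R_s$: getting the exponent $(2N+1)s$ rather than the naive $N^2$ is what makes the required bound $m \gtrsim sN$ (instead of $m \gtrsim N^2$) possible. Everything else is a mechanical adaptation of the Gaussian RIP proof; since the paper only needs the conclusion and not new techniques, I would simply cite \cite{Candes-Plan} and state that \eqref{mRIP} is what their estimate specializes to.
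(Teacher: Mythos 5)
Your proposal matches the paper exactly in substance: the paper offers no proof of this theorem at all, simply noting that it ``follows from \cite[Thm.~2.3]{Candes-Plan}'', which is precisely the citation you fall back on. Your additional three-step sketch (pointwise Gaussian concentration, an $\eta$-net of the rank-$s$ unit ball of cardinality $(C_0/\eta)^{(2N+1)s}$, union bound and absorption) is a correct outline of the standard argument behind that citation, modulo the routine care needed in the last step because the difference of a rank-$s$ matrix and its net approximant has rank up to $2s$ and must be split into two rank-$s$ pieces before absorbing.
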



\section{A noncommutative Kashin-Temlyakov theorem}\label{sec-KT}

We will prove a matrix version of the Kashin-Temlyakov characterization of the stability of sparse recovery via $\ell_1$-norm minimization in terms of widths.
To this end, we define 
three properties modeled after the ones studied in \cite{Kashin-Temlyakov}.

\begin{definition}
	Let $N^2 > m$ and $\mathcal{A} : M_N \to \R^m$ a linear operator. We say that $\mathcal{A}$ has a:
	\begin{enumerate}[(a)]
	\item  \emph{Matrix Strong Compressed Sensing Property (MSCSP)}  if for any $X \in M_N$ we have
	\begin{equation*}\label{eqn-MSCSP}
	\n{X - \Delta_1(\mathcal{A}X) }_{S_2} \le Cs^{-1/2} \rho_s(X)_{S_1}
	\end{equation*}
	for $s \asymp m/N$.
	\item \emph{Matrix Weak Compressed Sensing Property (MWCSP)} if for any $X \in M_N$ we have
	\begin{equation*}\label{eqn-MWCSP}
	\n{X - \Delta_1(\mathcal{A}X)}_{S_2} \le Cs^{-1/2} \n{X}_{S_1}
	\end{equation*}
	for $s \asymp m/N$.
	\item \emph{Matrix Width Property (MWP)} if for any $X \in \ker(\mathcal{A})$,
	\begin{equation}\label{eqn-MWP}
	\n{X}_{S_2} \le C (N/m)^{-1/2} \n{X}_{S_1}.
	\end{equation}
\end{enumerate}
\end{definition} 

Notice that the MSCSP is a weakening of condition (i) in \cite[Lemma 8]{Oymak-Mohan-Fazel-Hassibi}, since we are only considering $X' = \Delta_1(\mathcal{A}X)$.
Also, the name of the MWP comes from its clear relationship to the definition of the Gelfand numbers/widths.
The following theorem is a matrix version of the Kashin-Temlyakov theorem \cite[Thm. 2.2]{Kashin-Temlyakov}:

\begin{theorem}
	For a linear operator $\mathcal{A} : M_N \to \R^m$, the MSCSP, MWCSP and MWP are equivalent (up to a change in the constants).
\end{theorem}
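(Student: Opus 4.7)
The plan is to establish the cyclic chain MSCSP $\Rightarrow$ MWCSP $\Rightarrow$ MWP $\Rightarrow$ MSCSP. The first two implications are nearly immediate. For MSCSP $\Rightarrow$ MWCSP it suffices to note $\rho_s(X)_{S_1} \le \n{X}_{S_1}$, taking $Y = 0$ in the infimum defining $\rho_s$. For MWCSP $\Rightarrow$ MWP, given $X \in \ker(\mathcal{A})$ we have $\mathcal{A}X = 0$, which forces $\Delta_1(\mathcal{A}X) = 0$ since the zero matrix is the unique minimizer of the Schatten $1$-norm subject to $\mathcal{A}Z = 0$. MWCSP then reduces directly to the MWP estimate, with $s \asymp m/N$ converting $s^{-1/2}$ into the corresponding Gelfand-width factor.

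The main content is MWP $\Rightarrow$ MSCSP, which I would prove by a noncommutative adaptation of the classical Kashin--Temlyakov argument, replacing the coordinate support of the best $s$-sparse approximation by the singular subspaces of $X_{[s]}$. Fix $X \in M_N$, set $X' = \Delta_1(\mathcal{A}X)$ and $H = X - X' \in \ker(\mathcal{A})$. Writing $X_{[s]} = \sum_{i \le s} \sigma_i u_i v_i^*$, let $P_U$ and $P_V$ be the orthogonal projections onto $\mathrm{span}(u_1,\dots,u_s)$ and $\mathrm{span}(v_1,\dots,v_s)$, and define the tangent-space projection $P_T(Z) := P_U Z + (I - P_U) Z P_V$ together with its complement $P_{T^\perp}(Z) := (I - P_U) Z (I - P_V)$. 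The structural inputs I would use are: (i) $X - X_{[s]} \in T^\perp$, so $\rho_s(X)_{S_1} = \n{P_{T^\perp}X}_{S_1}$; (ii) the subdifferential of $\n{\cdot}_{S_1}$ at $X_{[s]}$ consists of $UV^* + W$ with $W \in T^\perp$ and $\n{W}_{\mathrm{op}} \le 1$; (iii) $\rank(P_T Z) \le 2s$ for every $Z$, while $P_T$ is a Hilbert--Schmidt orthogonal projection.

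Combining the minimality $\n{X'}_{S_1} \le \n{X}_{S_1}$ with the subgradient inequality for $\n{\cdot}_{S_1}$ at $X_{[s]}$ (choosing $W$ to be the sign factor of the SVD of $-P_{T^\perp} H$, so that $\langle W, -H\rangle = \n{P_{T^\perp}H}_{S_1}$), and estimating $|\langle UV^*, H\rangle| = |\langle UV^*, P_T H\rangle| \le \n{UV^*}_{S_2}\,\n{P_T H}_{S_2} = \sqrt{s}\,\n{P_T H}_{S_2}$, I would derive
\[
\n{P_{T^\perp} H}_{S_1} \;\le\; \sqrt{s}\,\n{P_T H}_{S_2} + 2\rho_s(X)_{S_1}.
\]
Together with (iii) and the Cauchy--Schwarz estimate $\n{P_T H}_{S_1} \le \sqrt{2s}\,\n{P_T H}_{S_2} \le \sqrt{2s}\,\n{H}_{S_2}$, this gives $\n{H}_{S_1} \le C_0 \sqrt{s}\,\n{H}_{S_2} + 2\rho_s(X)_{S_1}$.

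To close, I would apply MWP to $H$ and then tune the hidden constant in the relation $s \asymp m/N$ to be small enough to absorb the $\sqrt{s}\,\n{H}_{S_2}$ term on the right-hand side, yielding the MSCSP estimate $\n{H}_{S_2} \le C s^{-1/2}\rho_s(X)_{S_1}$. The main obstacle I expect is the noncommutative subgradient step: in the scalar Kashin--Temlyakov proof the analogous estimate is just a signed splitting of $\ell_1$-norms across a support, whereas here one must invoke the explicit form of the subdifferential of the nuclear norm at a low-rank point, together with the orthogonality structure of the tangent-space decomposition, in order to extract exactly the quantity $\n{P_{T^\perp} H}_{S_1}$. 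Once that technical lemma is in place, the absorption step is essentially identical to the classical argument.
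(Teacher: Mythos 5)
Your proposal is correct, and its overall skeleton (two trivial implications plus one substantive implication MWP $\Rightarrow$ MSCSP) coincides with the paper's. The difference lies entirely in how the substantive step is handled. The paper treats the $S_1$-instance-optimality statement $\n{X-\Delta_1(\mathcal{A}X)}_{S_1}\le C'\rho_s(X)_{S_1}$ as a black box, citing Theorem 2 of Oymak--Mohan--Fazel--Hassibi (a null-space-property criterion): MWP with $s$ a sufficiently small multiple of $m/N$ yields the null-space condition $\n{V_{[s]}}_{S_1}\le \sqrt{s}\,\n{V}_{S_2}\le C\sqrt{sm/N}\,\n{V}_{S_1}$ for $V\in\ker\mathcal{A}$, the cited theorem converts this into $S_1$-stability, and one final application of MWP to $X-\Delta_1(\mathcal{A}X)\in\ker\mathcal{A}$ gives the $S_2$ bound. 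You instead re-prove that lemma from scratch via the subdifferential of the nuclear norm at $X_{[s]}$ and the tangent-space projection $P_T$; your chain $\n{P_{T^\perp}H}_{S_1}\le \sqrt{s}\,\n{P_TH}_{S_2}+2\rho_s(X)_{S_1}$, the rank bound $\rank(P_TH)\le 2s$, and the final absorption using MWP are all sound (note $\langle UV^*,H\rangle=\langle UV^*,P_TH\rangle$ because $UV^*\in T$ and $P_T$ is Hilbert--Schmidt self-adjoint, and the sign choice $W\in T^\perp$ with $\n{W}_{\mathrm{op}}\le 1$ is legitimate by Watson's description of $\partial\n{\cdot}_{S_1}$). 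What the paper's route buys is brevity and explicit constants inherited from the cited null-space theorem; what yours buys is self-containment, at the cost of importing the (standard but nontrivial) subdifferential characterization, and with a slightly different decomposition ($P_T/P_{T^\perp}$ versus spectral truncation $V_{[s]}$). Both routes require the same tuning of the implied constant in $s\asymp m/N$ so that the $\sqrt{s}\cdot\sqrt{m/N}$ factor can be absorbed. One cosmetic point: the exponent $(N/m)^{-1/2}$ in the paper's statement of the MWP is evidently a misprint for $(N/m)^{1/2}$ (this is forced by consistency with $s\asymp m/N$ and with the Carl--Defant estimate), and your argument implicitly, and correctly, uses the intended version.
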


\begin{proof}
The MSCSP trivially implies the MWCSP, since $\rho_s(X)_{S_1} \le \n{X - 0}_{S_1} = \n{X}$.
Assume that $\mathcal{A}$ has the MWSCSP. Given $X \in \ker(\mathcal{A})$, note that $\mathcal{A}X = 0 = \mathcal{A}0$, so clearly $0 = \Delta_1(0) = \Delta_1(\mathcal{A}X)$ and thus from the MWSCSP we have $\n{X}_{S_2} \le Cs^{-1/2} \n{X}_{S_1}$, giving the MWP.
Assume now that we have the MWP, that is, that equation \eqref{eqn-MWP} holds.
If $s < \tfrac{1}{4}C^{-2}N/m$, from \cite[Thm. 2]{Oymak-Mohan-Fazel-Hassibi} (which is a matrix version of \cite[Thm. 2.1]{Kashin-Temlyakov}) we obtain
$$
\n{X - \Delta_1(\mathcal{A}X) }_{S_1} \le C' \rho_s(X)_{S_1}
$$
for $C' = 2(1 - 2\sqrt{C^2sm/N})^{-1}$. Since $X - \Delta_1(\mathcal{A}X) \in \ker\mathcal{A}$, the previous equation together with \eqref{eqn-MWP} imply
the MSCSP.
\end{proof}

The aforementioned Kashin-Temlyakov theorem says, in a nutshell, that the stability of sparse-vector recovery via $\ell_1$-minimization has limits imposed by the geometry of Banach spaces encoded in the appropriate Gelfand widths.
In the previous proposition, we showed a similar relationship relating the stability of low-rank recovery via nuclear norm minimization with some other Gelfand widths. 
As in the vector case, following \cite[Cor. 10.6]{Foucart-Rauhut}, there is a relationship between the geometry of $S_1^N$ and the stability of compressed sensing by any method. See Theorem \ref{theorem-compressive-widths} below for the precise statement.


\section{Stability of low-rank matrix recovery through Schatten $p$ quasi-norm minimization}\label{sec-FL}

In this technical section we prove a general result (a matrix version of the main theorem in \cite{Foucart-Lai}) that gives RIP-style conditions on the measuring map $\mathcal{A}$ that guarantee the stability of the Schatten $p$-norm minimization scheme.
For that we will need some notation:
Let $\alpha_s, \beta_s \ge 0$ be the best constants in the inequalities
$$
\alpha_s \n{Z}_{S_2} \le \n{\mathcal{A}Z}_{\ell_2} \le \beta_s \n{Z}_{S_2}, \quad \rank(Z) \le s.
$$
The results will be stated in terms of a quantity invariant under the change $\mathcal{A} \leftarrow c\mathcal{A}$, namely
$$
\gamma_{2s} := \frac{\beta_{2s}^2}{\alpha_{2s}^2} \ge 1.
$$
Note that this constant is related to the RIP constant, in fact
$$
\gamma_{2s} = \frac{1+\delta_{2s}}{1-\delta_{2s}}.
$$

Unlike in the rest of the paper, we will consider the more general situation of approximate recovery when measurements are moderately flawed, namely the problem
\begin{equation}\label{problem-Pptheta}\tag{$P_{p,\theta}$}
	\text{minimize} \n{Z}_{S_p} \quad \text{subject to }\quad \n{\mathcal{A}Z - y}_{\ell_2} \le \beta_{2s} \cdot \theta. 
\end{equation}
For simplicity, we will write ($P_p$) instead of ($P_{p,0}$).
Note that by a compactness argument, a solution of \eqref{problem-Pptheta} exists for any $0 < p \le 1$ and any $\theta \ge 0$.
The following theorem is a matrix version of \cite[Thm. 3.1]{Foucart-Lai}.
It gives conditions (in the spirit of the RIP) that guarantee not only the stability but also the robustness (that is, resistance to errors in the measurements) of the Schatten $p$-quasi-norm-minimization for low-rank matrix recovery.

\begin{theorem}\label{thm-Foucart-Lai}
	Given $0 < p \le 1$, if for some integer $t \ge s$
	\begin{equation}\label{eqn-thm-Foucart-Lai-hypothesis}
		\gamma_{2t}-1 < 4(\sqrt{2}-1)\left(\frac{t}{s}\right)^{1/p-1/2} 
	\end{equation}
	then a solution $X^*$ of \eqref{problem-Pptheta} approximates the original matrix $X$ with errors
	\begin{align}
		\n{X-X^*}_{S_p} &\le C_1 \rho_s(X)_{S_p} + D_1\cdot s^{1/p-1/2}\cdot \theta, \label{eqn-Foucart-Lai-estimate-1}\\
		\n{X-X^*}_{S_2} &\le C_2 \frac{\rho_s(X)_{S_p}}{t^{1/p-1/2}} + D_2\cdot \theta \label{eqn-Foucart-Lai-estimate-2},
	\end{align}
	where the constants $C_1$, $C_2$, $D_1$ and $D_2$ depend only on $p$, $\gamma_{2t}$ and the ratio $s/t$.
\end{theorem}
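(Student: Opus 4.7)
The plan is to mimic the Foucart--Lai scheme, replacing the sparse-support bookkeeping by SVD-level decompositions. Let $X^*$ be a solution of \eqref{problem-Pptheta} and set $h:=X^*-X$; feasibility of $X$ gives $\|\mathcal{A}h\|_{\ell_2}\le 2\beta_{2s}\theta$. I first split $h$ at the tangent space of $X_{[s]} = U_0\Sigma_0V_0^*$: setting $P_TZ := P_{U_0}Z + ZP_{V_0} - P_{U_0}ZP_{V_0}$, write $h = h_T + h_{T^\perp}$, where $h_T := P_Th$ has rank at most $2s$ and $h_{T^\perp} = P_{U_0}^\perp h P_{V_0}^\perp$ satisfies the orthogonality relations $X_{[s]}^*h_{T^\perp}=0$, $X_{[s]}h_{T^\perp}^*=0$. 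These force the exact Pythagorean identity $\|X_{[s]}+h_{T^\perp}\|_{S_p}^p=\|X_{[s]}\|_{S_p}^p+\|h_{T^\perp}\|_{S_p}^p$, which is the matrix substitute for the disjoint-support additivity used in the vector case.

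Combining this identity with the minimality $\|X+h\|_{S_p}^p\le\|X\|_{S_p}^p$, the $p$-triangle inequality $\|\cdot\|_{S_p}^p$ (valid for $p\le 1$), and the exact decomposition $\|X\|_{S_p}^p=\|X_{[s]}\|_{S_p}^p+\rho_s(X)_{S_p}^p$ (which holds because $X^c := X-X_{[s]}$ lies in $T^\perp$ thanks to the SVD of $X$), I obtain the matrix null-space-type bound
\begin{equation}
\|h_{T^\perp}\|_{S_p}^p \le \|h_T\|_{S_p}^p + 2\rho_s(X)_{S_p}^p. \tag{$\star$}
\end{equation}
I then block-decompose $h_{T^\perp}$ along its \emph{own} SVD, letting $h_{(j)}$ carry the singular values with indices in the $j$th block of length $t-s$. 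Each block has rank $\le t-s$, and the monotonicity $\sigma_{\max}(h_{(j+1)})\le\sigma_{\min}(h_{(j)})$ together with the elementary interpolation $\|\cdot\|_{S_2}\le(t-s)^{1/2-1/p}\|\cdot\|_{S_p}$ on rank-$(t-s)$ matrices, followed by the containment $\ell_p\hookrightarrow\ell_1$ on the sequence $(\|h_{(j)}\|_{S_p})_j$, yields
$$
\sum_{j\ge 2}\|h_{(j)}\|_{S_2}\le (t-s)^{1/2-1/p}\|h_{T^\perp}\|_{S_p}.
$$

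Now I apply the RIP to $h_T+h_{(1)}$, whose rank is at most $s+t\le 2t$. Hilbert--Schmidt orthogonality of $h_T\in T$ and $h_{(1)}\in T^\perp$ gives $\|h_T\|_{S_2}\le\|h_T+h_{(1)}\|_{S_2}$; writing $h_T+h_{(1)}=h-\sum_{j\ge 2}h_{(j)}$ and invoking the RIP bounds $\alpha_{2t},\beta_{2t}$,
$$
\alpha_{2t}\|h_T\|_{S_2}\le 2\beta_{2s}\theta + \beta_{2t}(t-s)^{1/2-1/p}\|h_{T^\perp}\|_{S_p}.
$$
Inserting $(\star)$ and the Schatten interpolation $\|h_T\|_{S_p}\le(2s)^{1/p-1/2}\|h_T\|_{S_2}$ produces a self-referential inequality for $\|h_T\|_{S_2}$. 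The threshold $4(\sqrt 2-1)(t/s)^{1/p-1/2}$ in \eqref{eqn-thm-Foucart-Lai-hypothesis} is calibrated precisely so that the coefficient of $\|h_T\|_{S_2}$ on the right stays strictly below $\alpha_{2t}$ on the left. Solving for $\|h_T\|_{S_2}$ and then combining with $(\star)$ delivers \eqref{eqn-Foucart-Lai-estimate-1} via $\|h\|_{S_p}^p\le\|h_T\|_{S_p}^p+\|h_{T^\perp}\|_{S_p}^p$, and \eqref{eqn-Foucart-Lai-estimate-2} via $\|h\|_{S_2}\le\|h_T+h_{(1)}\|_{S_2}+\sum_{j\ge 2}\|h_{(j)}\|_{S_2}$.

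The main obstacle is managing three distinct SVDs that do not share bases: the SVD of $X$, the tangent-space split of $h$ at $X_{[s]}$, and the SVD of $h_{T^\perp}$ driving the block decomposition. Every step that in the scalar case uses ``disjoint supports'' becomes a matrix identity to be checked: the additivity $\|X\|_{S_p}^p=\|X_{[s]}\|_{S_p}^p+\|X^c\|_{S_p}^p$, the Pythagorean identity for $X_{[s]}+h_{T^\perp}$, the Schatten-$p$ additivity $\|h_{T^\perp}\|_{S_p}^p=\sum_j\|h_{(j)}\|_{S_p}^p$ across blocks, and the Hilbert--Schmidt orthogonality $h_T\perp h_{(1)}$. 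All of these reduce to the single structural fact that if two matrices $A,B$ satisfy $A^*B=0$ and $AB^*=0$ then the multiset of singular values of $A+B$ is the union of those of $A$ and $B$; once that is established, the Foucart--Lai scalar argument transports to the matrix setting with the constants working out to the stated threshold.
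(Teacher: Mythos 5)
Your overall architecture is the same as the paper's: split $h$ at the tangent space of $X_{[s]}$ (the paper's $Z_{(s)}$, $Z_{(s)}^c$ are your $h_T$, $h_{T^\perp}$ up to sign, written in block form relative to the SVD of $X$), derive the cone constraint $(\star)$ from minimality together with the Schatten-$p$ additivity for matrices $A,B$ with $A^*B=AB^*=0$ (this is exactly the paper's Step 2, via \cite[Lemma 2.2]{Kong-Xiu}), block-decompose $h_{T^\perp}$ along its own SVD, and close the loop with the RIP. The gap is in the RIP step. You bound $\|\mathcal{A}(h_T+h_{(1)})\|_{\ell_2}$ by the plain triangle inequality, $\le \|\mathcal{A}h\|_{\ell_2}+\beta_{2t}\sum_{j\ge 2}\|h_{(j)}\|_{S_2}$, and this cannot yield the hypothesis \eqref{eqn-thm-Foucart-Lai-hypothesis}. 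Tracking your constants: after inserting $(\star)$ and $\|h_T\|_{S_p}\le(2s)^{1/p-1/2}\|h_T\|_{S_2}$, the coefficient of $\|h_T\|_{S_2}$ on the right is of order $\beta_{2t}\bigl(\tfrac{2s}{t-s}\bigr)^{1/p-1/2}$ against $\alpha_{2t}$ on the left, so the condition your argument actually requires has the form $\sqrt{\gamma_{2t}}<c_p\bigl(\tfrac{t-s}{s}\bigr)^{1/p-1/2}$. Since $\gamma_{2t}\ge 1$ always, this is vacuous unless $t$ is substantially larger than $s$ (for $p=1$ it forces $t>3s$), and it is not implied by \eqref{eqn-thm-Foucart-Lai-hypothesis}. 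Worse, in the case $t=s$ --- which the theorem explicitly allows and which produces the paper's corollary $\gamma_{2s}<4\sqrt{2}-3$ for nuclear-norm minimization --- your block length $t-s$ is zero and the decomposition of $h_{T^\perp}$ degenerates entirely.

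The missing idea is the restricted-orthogonality estimate via polarization: for $S_2$-normalized $Y_0$ (tangent part) and $Y_k$ (a later block), one has $\langle\mathcal{A}Y_k,\mathcal{A}Y_0\rangle=\tfrac14\bigl[\|\mathcal{A}(Y_k+Y_0)\|_{\ell_2}^2-\|\mathcal{A}(Y_k-Y_0)\|_{\ell_2}^2\bigr]\le\tfrac12(\beta_{2t}^2-\alpha_{2t}^2)$, which is precisely where the factor $\gamma_{2t}-1$ (rather than $\sqrt{\gamma_{2t}}$) enters. One then expands $\alpha_{2t}^2\|h_T+h_{(1)}\|_{S_2}^2\le\bigl\langle\mathcal{A}h-\sum_{j\ge 2}\mathcal{A}h_{(j)},\mathcal{A}(h_T+h_{(1)})\bigr\rangle$ term by term and solves the resulting quadratic inequality in $\|h_T\|_{S_2}$; solving the quadratic is what produces the $\tfrac{1+\sqrt2}{2}$ and hence the $4(\sqrt2-1)$ in the threshold. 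Without this, your scheme proves a statement of the same shape but under a genuinely different hypothesis that is unsatisfiable in the regime $t\asymp s$. (One point in your favor: your rank bookkeeping $\rank(h_T+h_{(1)})\le 2s+(t-s)\le 2t$ is more careful than the paper's, whose $Z_{T_0}+Z_{T_1}$ can have rank up to $2s+t>2t$ when $t<2s$; but that does not repair the constants in your RIP step.)
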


\begin{proof}
	We will need to recall some properties of the $S_p$-quasi-norm. Namely, for any matrices $U$ and $V$,
	\begin{equation}\label{eqn-Sp-quasinorm}
		\n{U}_{S_1} \le \n{U}_{S_p}, \quad \n{U}_{S_p} \le N^{1/p-1/2}\n{U}_{S_2}, \quad \n{U+V}^p_{S_p} \le \n{U}^p_{S_p} + \n{V}^p_{S_p}.
	\end{equation}
	
	\textbf{STEP 1:} Consequence of the assumption on $\gamma_{2t}$.\\
	We will consider certain matrix decompositions similar to the ones in \cite{Kong-Xiu}. Consider the singular value decomposition of $X$, given by
	$$
	X = U \diag(\lambda_i(X)) V^T
	$$
	where $U$, $V$ are unitary matrices and $\lambda(X) = (\lambda_1(X), \dotsc, \lambda_N(X))$ are the singular values of $X$ arranged in decreasing order.
	For any matrix $Z \in M_N$, we will consider a block decomposition of $Z$ with respect to $X$ as follows:
	let $U^TZV$ have the block form
	$$
	U^TZV = \begin{pmatrix} Z_{11} &Z_{12}\\
	Z_{21} &Z_{22} \end{pmatrix} 
	$$
	where $Z_{11}, Z_{12}, Z_{21}, Z_{22}$ are of sizes $s \times s$, $s \times (N-s)$, $(N-s) \times s$, $(N-s)\times (N-s)$, respectively.
	We then decompose $Z$ as $Z = Z_{(s)} + Z_{(s)}^c$ where
	\begin{align*}
		Z_{(s)} &= U \begin{pmatrix} Z_{11} &Z_{12}\\
		Z_{21} &0 \end{pmatrix} V^T \\
		Z_{(s)}^c &= U \begin{pmatrix} 0 &0\\
		0 &Z_{22} \end{pmatrix} V^T \\
	\end{align*}
	Furthermore, we now consider the singular value decomposition of $Z_{22}$ given by
	$$
	Z_{22} = P \diag(\lambda(Z_{22})) Q^T
	$$
	with $P$ and $Q$ being $(N-s)\times(N-s)$ unitary matrices, and $\lambda(Z_{22})$ is the vector of the $N-s$ singular values of $Z_{22}$ arranged in decreasing order.
	We decompose $\lambda(Z_{22})$ as a sum of vectors $Z_{T_i}$, each of sparsity at most $t$, where $T_1$ corresponds to the locations of the $t$ largest entries of $\lambda(Z_{22})$, $T_2$ to the locations of the next $t$ largest entries, and so on. For $i \ge 1$ we now define
	$$
	Z_{T_i} = U \begin{pmatrix} 0 &0\\
	0 &P \diag(\lambda_{T_i}(Z_{22}))Q^T \end{pmatrix} V^T,\\
	$$
	and denote $Z_{T_0} := Z_{(s)}$.
	
	We first observe that
	\begin{align}
		\n{Z_{T_0}}_{S_2}^2 + \n{Z_{T_1}}_{S_2}^2 &= \n{Z_{T_0} + Z_{T_1}}_{S_2}^2 \le \frac{1}{\alpha_{2t}^2}\n{ \mathcal{A}(Z_{T_0} + Z_{T_1}) }_{\ell_2}^2 \nonumber \\
		&= \frac{1}{\alpha_{2t}^2} \big\langle \mathcal{A}( Z - Z_{T_2} - Z_{T_3} - \cdots ) , \mathcal{A}(Z_{T_0} + Z_{T_1})  \big\rangle \nonumber \\
		&=  \frac{1}{\alpha_{2t}^2} \big\langle \mathcal{A}Z, \mathcal{A}(Z_{T_0} + Z_{T_1}) \big\rangle  +  \frac{1}{\alpha_{2t}^2} \sum_{k\ge 2} \big[ \big\langle \mathcal{A}(-Z_{T_k}) , \mathcal{A}Z_{T_0} \big\rangle + \big\langle \mathcal{A}(-Z_{T_k}) , \mathcal{A}Z_{T_1} \big\rangle \big] \label{eqn-step1-inner-products}
	\end{align}
	
	Let us renormalize the vectors $-Z_{T_k}$ and $Z_{T_0}$ so that their $S_2$-norms equal one by setting $Y_k := -Z_{T_k}/ \n{Z_{T_k}}_{S_2}$ and $Y_0 := Z_{T_0}/\n{Z_{T_0}}_{S_2}$.
	We then obtain, using the polarization identity
	\begin{align*}
		\frac{\big\langle \mathcal{A}(-Z_{T_k}) , \mathcal{A}Z_{T_0} \big\rangle}{\n{Z_{T_k}}_{S_2}\n{Z_{T_0}}_{S_2}} &= \pair{\mathcal{A}Y_k}{\mathcal{A}Y_0} 
		= \frac{1}{4}\big[ \n{\mathcal{A}(Y_k+Y_0)}_{\ell_2}^2 - \n{\mathcal{A}(Y_k-Y_0)}_{\ell_2}^2\big] \\
		&\le \frac{1}{4}\big[ \beta_{2t}^2\n{Y_k+Y_0}_{S_2}^2 - \alpha_{2t}^2\n{Y_k-Y_0}_{S_2}^2\big] = \frac{1}{2}[ \beta_{2t}^2 - \alpha_{2t}^2 ]. 
	\end{align*}
	An analogous argument with $T_1$ in place of $T_0$ allows us to conclude
	\begin{equation}\label{eqn-step1-polarization}
		\big\langle \mathcal{A}(-Z_{T_k}) , \mathcal{A}Z_{T_0} \big\rangle + \big\langle \mathcal{A}(-Z_{T_k}) , \mathcal{A}Z_{T_1} \big\rangle 
		\le \frac{\beta_{2t}^2 - \alpha_{2t}^2}{2}\n{Z_{T_k}}_{S_2}\big[ \n{Z_{T_0}}_{S_2} + \n{Z_{T_1}}_{S_2} \big].
	\end{equation}
	On the other hand, we have
	\begin{equation}\label{eqn-step1-cauchy-schwartz}
		\big\langle \mathcal{A}Z, \mathcal{A}(Z_{T_0} + Z_{T_1}) \big\rangle \le \n{\mathcal{A}Z}_{\ell_2}\cdot \n{\mathcal{A}(Z_{T_0} + Z_{T_1})}_{\ell_2}
		\le \n{\mathcal{A}Z}_{\ell_2}\cdot \beta_{2t}\big[\n{Z_{T_0}}_{S_2}+\n{Z_{T_1}}_{S_2}\big].
	\end{equation}
	Substituting the inequalities \eqref{eqn-step1-polarization} and \eqref{eqn-step1-cauchy-schwartz} into \eqref{eqn-step1-inner-products} we have
	$$
	\n{Z_{T_0}}_{S_2}^2 + \n{Z_{T_1}}_{S_2}^2 \le \left( \frac{\gamma_{2t}}{\beta_{2t}} \n{\mathcal{A}Z}_{\ell_2} + \frac{\gamma_{2t}-1}{2} \sum_{k \ge 2} \n{Z_{T_k}}_{S_2}  \right)\big[ \n{Z_{T_0}}_{S_2} + \n{Z_{T_1}}_{S_2} \big].
	$$
	If we set $c := \n{\mathcal{A}Z}_{\ell_2}\cdot\gamma_{2t}/\beta_{2t}$, $d := (\gamma_{2t}-1)/2$ and $\Sigma = \sum_{k \ge 2} \n{Z_{T_k}}_{S_2}$, the previous inequality is
	$$
	\n{Z_{T_0}}_{S_2}^2 - (c+d\Sigma)\n{Z_{T_0}}_{S_2} + \n{Z_{T_1}}_{S_2}^2 - (c+d\Sigma)\n{Z_{T_1}}_{S_2} \le 0,
	$$
	or equivalently,
	$$
	\left[ \n{Z_{T_0}}_{S_2} - \frac{c+d\Sigma}{2} \right]^2 + \left[ \n{Z_{T_1}}_{S_2} - \frac{c+d\Sigma}{2} \right]^2 \le \frac{(c+d\Sigma)^2}{2}.
	$$
	by getting rid of the second squared term, this easily implies
	\begin{equation}\label{eqn-step1-easy-bound-from-squares}
		\n{Z_{T_0}}_{S_2} \le \frac{c+d\Sigma}{2} + \frac{c+d\Sigma}{\sqrt{2}} = \frac{1+\sqrt{2}}{2}(c+d\Sigma).
	\end{equation}
	By H\"older's inequality (see \eqref{eqn-Sp-quasinorm}) we get
	\begin{equation}\label{eqn-step1-Holder}
		\n{Z_{T_0}}_{S_p} \le s^{1/p-1/2}\n{Z_{T_0}}_{S_2} \le  s^{1/p-1/2}\frac{1+\sqrt{2}}{2}(c+d\Sigma).
	\end{equation}
	We now proceed to bound $\Sigma$.
	For $k \ge 2$, let $\eta$, $\eta'$ be singular values of $Z_{T_k}$, $Z_{T_{k-1}}$, respectively. By definition, we must have $\eta \le \eta'$. Raising to the $p$-th power and averaging over all singular values of $Z_{T_{k-1}}$, $\eta^p \le t^{-1} \n{Z_{T_{k-1}}}^p_{S_p}$, and hence $\eta^2 \le t^{-2/p} \n{Z_{T_{k-1}}}_{S_p}^{2}$.
	Adding over all singular values of $Z_{T_k}$ and taking the square root, this yields $\n{Z_{T_k}}_{S_2} \le t^{1/2-1/p} \n{Z_{T_{k-1}}}_{S_p}$. Therefore,
	$$
	\Sigma = \sum_{k \ge 2} \n{Z_{T_k}}_{S_2} \le t^{1/2-1/p} \sum_{k \ge 1} \n{Z_{T_k}}_{S_p} \le t^{1/2-1/p} \Bigg[ \sum_{k \ge 1} \n{Z_{T_k}}_{S_p}^p \Bigg]^{1/p} = t^{1/2-1/p} \n{Z_{(s)}^c}_{S_p}.
	$$
	 Combining the above inequality with \eqref{eqn-step1-Holder}, we obtain
	\begin{equation}\label{eqn-step1-partial-conclusion}
		\n{Z_{(s)}}_{S_p} \le \frac{\lambda}{2\beta_{2t}} \cdot\n{\mathcal{A}Z}_{\ell_2}\cdot s^{1/p-1/2} + \mu \cdot \n{Z_{(s)}^c}_{S_p}.
	\end{equation}
	where the constants $\lambda$ and $\mu$ are given by
	$$
	\lambda := (1+\sqrt{2})\gamma_{2t} \text{ and } \mu := \frac{1}{4}(1+\sqrt{2})(\gamma_{2t}-1) \left(\frac{s}{t}\right)^{1/p-1/2}.
	$$
	Note that the assumption on $\gamma_{2t}$ translates into the inequality $\mu < 1$.
	\\
	\\
	\textbf{STEP 2:}
	From now on let $Z := X - X^*$. \\
	Because $X^*$ is a minimizer of \eqref{problem-Pptheta}, we have
	\begin{equation}\label{eqn-step2-minimizer}
	\n{X^*}_{S_p}^p \le \n{X}_{S_p}^p.	
	\end{equation}	
	From \cite[Lemma 2.2]{Kong-Xiu}, whenever $B,C \in M_N$ satisfy $B^TC = 0$ and $BC^T= 0$ one has $\n{B+C}_{S_p}^p = \n{B}_{S_p}^p+\n{C}_{S_p}^p$.
	In particular, note that
	\begin{equation}\label{eqn-step2-orthogonality-of-the-norms}
		\n{X}_{S_p}^p = \n{X_{(s)}}_{S_p}^p + \n{X_{(s)}^c}_{S_p}^p \text{ and } \n{X_{(s)} - Z_{(s)}^c}_{S_p}^p = \n{X_{(s)}}_{S_p}^p + \n{Z_{(s)}^c}_{S_p}^p.
	\end{equation}   
	From the $p$-triangle inequality (see \eqref{eqn-Sp-quasinorm}), since
	$$
	X_{(s)} - Z_{(s)}^c = X - Z - X_{(s)}^c + Z_{(s)} = X^* - X_{(s)}^c + Z_{(s)},
	$$
	we get
	$$
	\n{ X_{(s)} - Z_{(s)}^c  }_{S_p}^p \le \n{X^*}_{S_p}^p + \n{X_{(s)}^c}_{S_p}^p + \n{Z_{(s)}}_{S_p}^p.
	$$
	Together with \eqref{eqn-step2-minimizer} and both equalities in \eqref{eqn-step2-orthogonality-of-the-norms}, this yields
	$$
	\n{X_{(s)}}_{S_p}^p + \n{Z_{(s)}^c}_{S_p}^p \le  \n{X_{(s)}}_{S_p}^p + \n{X_{(s)}^c}_{S_p}^p \n{X_{(s)}^c}_{S_p}^p + \n{Z_{(s)}}_{S_p}^p.
	$$
	After a cancellation and noticing that $\n{X_{(s)}^c}_{S_p}^p = \rho_s(X)_{S_p}^p$, we obtain
	\begin{equation}\label{eqn-part2-conclusion}
		\n{Z_{(s)}^c}_{S_p}^p \le 2 \rho_s(X)_{S_p}^p + \n{Z_{(s)}}_{S_p}^p.
	\end{equation}
	
	\textbf{STEP 3:} Error estimates. \\
	We first note the bound
	$$
	\n{\mathcal{A}Z}_{\ell_2} = \n{\mathcal{A}X - \mathcal{A}X^*}_{\ell_2} \le \n{\mathcal{A}X - y}_{\ell_2} + \n{y - \mathcal{A}X^*}_{\ell_2} \le 2\beta_{2s}\cdot \theta.
	$$
	For the $S_p$-error, we combine the estimates in \eqref{eqn-step1-partial-conclusion} and \eqref{eqn-part2-conclusion} to obtain
	$$
	\n{Z_{(s)}^c}_{S_p}^p \le  2 \rho_s(X)_{S_p}^p + \lambda^p \cdot s^{1-p/2}\cdot\theta^p + \mu^p \cdot \n{Z_{(s)}^c}_{S_p}^p.
	$$
	As a consequence of $\mu<1$, we have
	$$
	\n{Z_{(s)}^c}_{S_p}^p \le  \frac{2}{1-\mu^p} \rho_s(X)_{S_p}^p + \frac{\lambda^p}{1-\mu^p} \cdot s^{1-p/2}\cdot\theta^p.
	$$
	Using the estimate \eqref{eqn-step1-partial-conclusion} once again, we can derive that
	\begin{align*}
		\n{Z}_{S_p}^p &\le \n{Z_{(s)}}_{S_p}^p + \n{Z_{(s)}^c}_{S_p}^p \le (1+\mu^q)\cdot\n{Z_{(s)}^c}_{S_p}^p + \lambda^p \cdot s^{1-p/2}\cdot\theta^p \\
		&\le \frac{2}{1-\mu^p}(1+\mu^p) \rho_s(X)_{S_p}^p + \frac{2\lambda^p}{1-\mu^p}\cdot s^{1-p/2}\cdot\theta^p \\
		&\le 2^{1-1/p}\left[ \frac{2^{1/p}}{(1-\mu^p)^{1/p}}(1+\mu^p)^{1/p} \rho_s(X)_{S_p} + \frac{2^{1/p}\lambda}{(1-\mu^p)^{1/p}}\cdot s^{1/p-1/2}\cdot\theta \right]^p
	\end{align*}
	where we have used the inequality $(a^p + b^p)^{1/p} \le 2^{1/p-1}(a+b)$ for $a,b \ge 0$.
	The desired estimate \eqref{eqn-Foucart-Lai-estimate-1} follows with 
	$$
	C_1 := \frac{2^{2/p-1}(1+\mu^p)^{1/p}}{(1-\mu^p)^{1/p}}, D_1 := \frac{2^{2/p-1}\lambda}{(1-\mu^p)^{1/p}}.
	$$
	For the $S_2$-error, let us observe that the bound in
	 \eqref{eqn-step1-easy-bound-from-squares} also holds if we replace $\n{Z_{T_0}}_{S_2}$ by $\n{Z_{T_1}}_{S_2}$, and hence
	$$
	\n{Z}_{S_2} = \left[ \sum_{k \ge 0} \n{Z_{T_k}}_{S_2}^2 \right]^{1/2}
	\le \sum_{k \ge 0} \n{Z_{T_k}}_{S_2} \le (1+\sqrt{2}) \cdot (c+d\Sigma) + \Sigma
	\le \nu \cdot \Sigma + 2\lambda \cdot \theta,
	$$
	where $\nu = (\lambda + 1-\sqrt{2})/2$.
	We also have that
\begin{multline*}
	\Sigma \le t^{1/2-1/p} \n{Z_{(s)}^c}_{S_p} 
	\le t^{1/2-1/p} \left[ \frac{2}{1-\mu^p} \rho_s(X)_{S_p}^p + \frac{\lambda^p}{1-\mu^p} \cdot s^{1-p/2}\cdot\theta^p \right]^{1/p}\\
	\le t^{1/2-1/p}2^{1/p-1} \left[ \frac{2^{1/p}}{(1-\mu^p)^{1/p}} \rho_s(X)_{S_p} + \frac{\lambda}{(1-\mu^p)^{1/p}} \cdot s^{1/p-1/2}\cdot\theta \right],
\end{multline*}
	and hence we conclude that
	$$
	\n{Z}_{S_2} \le \nu t^{1/2-1/p} 2^{1/p-1} \left[ \frac{2^{1/p}}{(1-\mu^p)^{1/p}} \rho_s(X)_{S_p} + \frac{\lambda}{(1-\mu^p)^{1/p}} \cdot s^{1/p-1/2}\cdot\theta \right]
	+ 2\lambda \theta.		
	$$
This gives the estimate \eqref{eqn-Foucart-Lai-estimate-2} with
$$
C_2 = \frac{2^{2/p-2}(\lambda+1-\sqrt{2})}{(1-\mu^p)^{1/p}}, \qquad D_2 = \frac{2^{1/p-2}\lambda(\lambda+1-\sqrt{2})}{(1-\mu^p)^{1/p}} + 2\lambda.
$$
\end{proof}

As consequences of Theorem \ref{thm-Foucart-Lai}, we obtain two corollaries that are matrix versions of the ones in \cite{Foucart-Lai}.
The first one corresponds to the case of exact recovery.

\begin{corollary}
	Given $0 < p \le 1$, if
	$$
	\gamma_{2t} - 1 < 4 (\sqrt{2} - 1) \left( \frac{t}{s} \right)^{1/p-1/2} \quad\text{ for some integer } t \ge s,
	$$
	then every rank $s$ matrix is exactly and stably recovered by solving ($P_p$).
\end{corollary}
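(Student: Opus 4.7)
The plan is to simply specialize Theorem \ref{thm-Foucart-Lai} to the noiseless case $\theta=0$. Since $(P_p)$ is by definition $(P_{p,0})$, the hypothesis of the corollary is identical to the hypothesis \eqref{eqn-thm-Foucart-Lai-hypothesis} of Theorem \ref{thm-Foucart-Lai}, so that theorem applies directly to any solution $X^*$ of $(P_p)$.

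Taking $\theta=0$ in the two error estimates \eqref{eqn-Foucart-Lai-estimate-1} and \eqref{eqn-Foucart-Lai-estimate-2} yields
\begin{align*}
\n{X-X^*}_{S_p} &\le C_1\,\rho_s(X)_{S_p}, \\
\n{X-X^*}_{S_2} &\le \frac{C_2}{t^{1/p-1/2}}\,\rho_s(X)_{S_p},
\end{align*}
with $C_1$ and $C_2$ depending only on $p$, $\gamma_{2t}$ and $s/t$. These are exactly stability estimates in the $S_p$- and $S_2$-(quasi-)norms: the reconstruction error is controlled by the distance from $X$ to the set of rank-$s$ matrices measured in the Schatten $p$-quasi-norm.

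For the exact-recovery claim, I would observe that if $X$ itself has rank at most $s$, then $X_{[s]}=X$ and therefore $\rho_s(X)_{S_p}=0$. Plugging this into either of the two displayed bounds forces $\n{X-X^*}=0$, i.e. $X^*=X$, so any minimizer of $(P_p)$ recovers $X$ exactly.

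There is no real obstacle here: the entire content of the corollary is packaged in Theorem \ref{thm-Foucart-Lai}, and the only thing to verify is that the ``stably'' in the statement refers precisely to the bounds obtained by sending $\theta\to 0$. The sole (very minor) point worth mentioning is that ``every rank-$s$ matrix'' is the special case $\rho_s(X)_{S_p}=0$ of the general stability estimates, so exactness and stability are obtained in one stroke from the same theorem.
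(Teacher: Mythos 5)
Your proposal is correct and is exactly the argument the paper intends: the corollary is stated as an immediate consequence of Theorem \ref{thm-Foucart-Lai}, obtained by setting $\theta=0$ (so that \eqref{problem-Pptheta} becomes ($P_p$)) and noting that $\rho_s(X)_{S_p}=0$ for matrices of rank at most $s$. Nothing further is needed.
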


The second one deals with the special case of nuclear norm minimization.

\begin{corollary}
	Under the assumption that $\gamma_{2s} < 4\sqrt{2}-3 \approx 2.6569,$
	every rank $s$ matrix is exactly and stably recovered by solving ($P_1$).
\end{corollary}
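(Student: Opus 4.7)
The plan is to apply the previous corollary, specialized to the parameter choices $p = 1$ and $t = s$, and to verify that the hypothesis of that corollary then reduces to the stated assumption $\gamma_{2s} < 4\sqrt{2} - 3$.

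First, I observe that in the previous corollary, the hypothesis reads
$$
\gamma_{2t} - 1 < 4(\sqrt{2} - 1)\left(\frac{t}{s}\right)^{1/p - 1/2}.
$$
Setting $p = 1$ makes the exponent $1/p - 1/2$ equal to $1/2$, and setting $t = s$ makes the base $t/s$ equal to $1$, so the right-hand side collapses to $4(\sqrt{2} - 1)$. The hypothesis thus becomes $\gamma_{2s} - 1 < 4(\sqrt{2} - 1)$, or equivalently $\gamma_{2s} < 4\sqrt{2} - 3$, which is exactly the assumption of the corollary we wish to prove.

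Next, since the choices $p = 1$ and $t = s$ are permissible integers satisfying $t \ge s$, the previous corollary applies directly, guaranteeing that every rank-$s$ matrix is exactly and stably recovered by solving $(P_1)$. This gives the desired conclusion.

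I do not anticipate any real obstacle: the statement is a direct specialization of the previous corollary to the endpoint case $p = 1$, $t = s$, and the only content is the numerical simplification of the hypothesis. The one thing worth noting is that stability (as opposed to mere exact recovery) is already packaged into the previous corollary via the error estimates \eqref{eqn-Foucart-Lai-estimate-1} and \eqref{eqn-Foucart-Lai-estimate-2} from Theorem \ref{thm-Foucart-Lai}, so no additional argument is needed.
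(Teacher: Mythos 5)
Your proposal is correct and is exactly the argument the paper intends: the corollary is the specialization of the preceding corollary (equivalently, of Theorem \ref{thm-Foucart-Lai} with $\theta=0$) to $p=1$ and $t=s$, under which the hypothesis $\gamma_{2t}-1 < 4(\sqrt{2}-1)(t/s)^{1/p-1/2}$ reduces to $\gamma_{2s} < 4\sqrt{2}-3$. Your remark that stability is already contained in the error estimates \eqref{eqn-Foucart-Lai-estimate-1} and \eqref{eqn-Foucart-Lai-estimate-2} is also on point.
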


This last Corollary is clearly related to existing results on the RIP, it corresponds to the condition $\delta_{2s} < 2(3-\sqrt{2})/7 \approx 0.4531$. 
Note that for the specific case of $p=1$ this condition is not the best possible:
a very recent result of Cai and Zhang \cite{Cai-Zhang} shows that the optimal condition to have exact recovery of rank $s$ matrices via nuclear norm minimization is in fact $\delta_{2s} < 1/\sqrt{2} \approx 0.7071$.
Another recent result similar to our Theorem \ref{thm-Foucart-Lai} is
\cite[Thm. 6]{Liu-Huang-Chen} (which in turn generalizes results of Lee and Bresler \cite{Lee-Bresler-admira,Lee-Bresler-nuclear}), where they get a conclusion of the same form as \eqref{eqn-Foucart-Lai-estimate-2} but requiring a stronger hypothesis.
Finally, note also that Theorem \ref{thm-existence-RIP-matrices} guarantees the existence of maps $\mathcal{A}$ satisfying the hypothesis of Theorem \ref{thm-Foucart-Lai}.

\section{The Gelfand widths of $S_p$-balls for $0 < p \le 1$}

In this section we calculate the Gelfand numbers
$$
c_m(id : S_p^N \to S_q^N) 
$$
for $0 < p \le 1$, $p < q \le 2$.
This can be considered as a noncommutative version of the results from \cite{Foucart-Pajor-Rauhut-Ullrich},
where they use compressed sensing ideas to calculate the corresponding Gelfand numbers
$$
c_m(id : \ell_p^N \to \ell_q^N).
$$
Inspired by their approach, our proof is based on low-rank matrix recovery ideas.

Our main result is the following (compare to \cite[Thm. 1.1]{Foucart-Pajor-Rauhut-Ullrich}).

\begin{theorem}\label{thm-Gelfand-widths-of-Schatten-classes}
	For $0 < p \le 1$ and $p < q \le 2$, if $1 \le m < N^2$, then
	$$
	d^m(B^N_{S_p}, S_q^N) \asymp_{p,q} \min\left\{1, \frac{N}{m} \right\}^{1/p-1/q}.
		$$
	and, if $p < 1$, 
	$$
	d^m(B^N_{S_{p,\infty}}, S_q^N) \asymp_{p,q} \min\left\{1, \frac{N}{m} \right\}^{1/p-1/q}.
	$$
\end{theorem}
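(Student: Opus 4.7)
My plan is to prove matching upper and lower bounds on $d^m(B^N_{S_p}, S_q^N)$, mirroring the strategy of \cite{Foucart-Pajor-Rauhut-Ullrich} but with sparse-vector recovery replaced by the low-rank matrix recovery developed in Section \ref{sec-FL}. The estimate splits into a trivial regime $m \le N-1$ (where $\min\{1, N/m\}^{1/p-1/q} = 1$) and the main regime $N \le m < N^2$, where one must establish $d^m \asymp (N/m)^{1/p-1/q}$.

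\textbf{Upper bound.} In the trivial regime the singular-value inequality $\n{X}_{S_q} \le \n{X}_{S_p}$ for $p \le q$ yields $d^m \le 1$ via $L = M_N$. For $m \ge N$, I would set $s \asymp m/N$. Theorem \ref{thm-existence-RIP-matrices} then produces a map $\mathcal{A} : M_N \to \R^m$ with $\delta_{2s}(\mathcal{A})$ small enough that hypothesis \eqref{eqn-thm-Foucart-Lai-hypothesis} of Theorem \ref{thm-Foucart-Lai} holds with $t = s$. Taking $L := \ker\mathcal{A}$ (of codimension at most $m$), for every $X \in L$ the zero matrix solves $(P_p)$ with $y = \mathcal{A}X = 0$, so \eqref{eqn-Foucart-Lai-estimate-2} with $\theta = 0$ and $X^* = 0$ gives
$$
\n{X}_{S_2} \le \frac{C_2\,\rho_s(X)_{S_p}}{s^{1/p-1/2}} \le \frac{C_2}{s^{1/p-1/2}}\,\n{X}_{S_p}.
$$
Log-convexity of Schatten quasi-norms in $1/r$ (a direct H\"older applied to the singular values) then yields $\n{X}_{S_q} \le \n{X}_{S_p}^{1-\theta}\,\n{X}_{S_2}^{\theta}$ with $\theta \in (0,1]$ defined by $1/q = (1-\theta)/p + \theta/2$; since $\theta(1/p - 1/2) = 1/p - 1/q$, substitution delivers $\n{X}_{S_q} \lesssim_{p,q} (N/m)^{1/p-1/q}\,\n{X}_{S_p}$, which is the desired upper bound. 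For the $S_{p,\infty}$ variant (available when $p < 1$), I would run the same scheme with an auxiliary exponent $p' \in (p, \min(1,q))$ in Theorem \ref{thm-Foucart-Lai}, controlling $\rho_s(X)_{S_{p'}}$ via the weak-type bound $\sigma_k^*(X) \le k^{-1/p}\n{X}_{S_{p,\infty}}$ (which, summed, gives $\rho_s(X)_{S_{p'}} \lesssim_{p,p'} s^{1/p'-1/p}\n{X}_{S_{p,\infty}}$), and then interpolate to $S_q$ exactly as above.

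\textbf{Lower bound and main obstacle.} For $m \le N-1$, a rank-one trick suffices: fix any unit vector $v \in \R^N$; the $N$-dimensional subspace $\{uv^T : u \in \R^N\}$ meets every codimension-$m$ subspace of $M_N$ nontrivially, and rank-one matrices satisfy $\n{X}_{S_p} = \n{X}_{S_q}$, forcing $d^m \ge 1$. The main obstacle I anticipate is the regime $N \le m < N^2$. The naive route through the isometric diagonal embedding $\ell_p^N \hookrightarrow S_p^N$ only transfers the vector lower bound $d^m(B_{\ell_p^N}, \ell_q^N)$, which carries a spurious logarithmic factor and is off by roughly $(N/\log(eN/m))^{1/p-1/q}$: it fails to reflect the $(2N-1)$-parameter nature of rank-one matrices. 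A sharp noncommutative lower bound appears to require either a Gluskin-style volumetric argument (leveraging Saint-Raymond's formula for $\mathrm{vol}(B^N_{S_p})$ and a packing estimate against $B^N_{S_q}$) or a noncommutative adaptation of the bad-subspaces construction of \cite{Foucart-Pajor-Rauhut-Ullrich}, anchored at the tight Carl--Defant estimate $c_m(\mathrm{id} : S_1^N \to S_2^N) \asymp \min\{1, N/m\}^{1/2}$ from \cite{Carl-Defant-tensor-products}. Once the $S_p$ bound is in hand, the $S_{p,\infty}$ lower bound is automatic from the inclusion $B^N_{S_p} \subset B^N_{S_{p,\infty}}$.
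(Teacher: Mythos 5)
Your upper bound for the strong ball $B^N_{S_p}$ is correct and is a legitimate shortcut past the paper's argument: taking $L=\ker\mathcal{A}$ with $\delta_{2s}(\mathcal{A})$ small, reading off the $S_2$ null-space estimate from \eqref{eqn-Foucart-Lai-estimate-2} with $X^*=0$, $\theta=0$, and interpolating $\n{X}_{S_q}\le \n{X}_{S_p}^{1-\theta}\n{X}_{S_2}^{\theta}$ does give $(N/m)^{1/p-1/q}$ (the paper instead re-runs the rank-$s$ block decomposition directly in $S_q$). But the proposal has two genuine gaps. The larger one is the lower bound in the regime $N\le m<N^2$, which you explicitly leave open. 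No volumetric or Gluskin-type construction is needed: the paper (Proposition \ref{prop-lower-bounds}) argues by contradiction exactly in the spirit of \cite[Thm. 2.1]{Foucart-Pajor-Rauhut-Ullrich}, which is itself a recovery argument rather than a bad-subspaces construction. If $d^m(B^N_{S_p},S_q^N)<c\mu^{1/p-1/q}$ with $\mu=\min\{1,N/2m\}$, then the witnessing subspace is $\ker\mathcal{A}$ for some $\mathcal{A}:M_N\to\R^m$, and for $s=\lfloor 1/\mu\rfloor$ every $V\in\ker\mathcal{A}\setminus\{0\}$ satisfies $\n{V_{[2s]}}_{S_p}^p\le\n{V-V_{[2s]}}_{S_p}^p$; by the null-space condition of \cite[Thm. 3]{Oymak-Mohan-Fazel-Hassibi} this forces exact recovery of all rank-$s$ matrices from $m$ measurements, which by the degrees-of-freedom count requires $m\ge Ns>N/(2\mu)\ge m$, a contradiction. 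So the same compressed-sensing machinery you use for the upper bound closes the lower bound; your rank-one subspace trick only covers $m\le N-1$, where the claim is anyway the trivial one.

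The second gap is in your upper bound for $B^N_{S_{p,\infty}}$ when $q<2$. Your scheme correctly yields $\n{X}_{S_2}\lesssim s^{-(1/p-1/2)}\n{X}_{S_{p,\infty}}$ via the auxiliary exponent $p'\in(p,\min(1,q))$ and \eqref{eqn-trivial-best-appxn-weak-Sp}, but "interpolating exactly as above" cannot mean the strong-type H\"older inequality: for $X\in B^N_{S_{p,\infty}}$ you have no bound on $\n{X}_{S_p}$ (it can be of order $(\log N)^{1/p}$), and interpolating between $S_{p'}$ and $S_2$ gives $\n{X}_{S_q}\lesssim s^{-\theta'(1/p-1/2)}$ with $\theta'=(1/p'-1/q)/(1/p'-1/2)$, and $\theta'(1/p-1/2)$ is \emph{strictly less} than $1/p-1/q$ for every fixed $p'>p$ once $q<2$ (e.g. $p=1/2$, $p'=3/4$, $q=1$ gives exponent $3/5$ instead of $1$). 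Moreover the two facts $\n{X}_{S_{p'}}\le C$ and $\n{X}_{S_2}\le\eps$ alone genuinely cannot imply more than H\"older gives (matrices with constant singular values saturate it), so this is not merely a presentational issue. The repair is either to use the weak-type (Marcinkiewicz) interpolation $\n{X}_{S_q}\le C_{p,q}\n{X}_{S_{p,\infty}}^{1-\theta}\n{X}_{S_2}^{\theta}$ with $1/q=(1-\theta)/p+\theta/2$, which does hold for $p<q\le 2$ but requires splitting the sum over singular values at the crossover index rather than a "direct H\"older", or to do what the paper does in Theorem \ref{thm-upper-bounds}: bound $\n{Z}_{S_q}$ directly by $s^{-(1/r-1/q)}\n{Z}_{S_r}$ for $Z\in\ker\mathcal{A}$ via the rank-$s$ block decomposition and the RIP, with $r=\min\{1,q\}$, and then chain with $\n{Z}_{S_r}\lesssim\rho_s(X)_{S_r}\lesssim s^{-(1/p-1/r)}\n{X}_{S_{p,\infty}}$.
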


Before the proof, let us go through some preliminaries.
Recall that it is classical to show that, for $q>p$,
\begin{align}
	\rho_s(X)_{S_q} &\le \frac{1}{s^{1/p-1/q}}\n{X}_{S_p}, \label{eqn-trivial-best-appxn-Sp} \\
	\rho_s(X)_{S_q} &\le \frac{D_{p,q}}{s^{1/p-1/q}}\n{X}_{S_{p,\infty}}, \qquad D_{p,q} := (q/p-1)^{-1/q}. \label{eqn-trivial-best-appxn-weak-Sp}
\end{align}

\subsection{Lower bounds}

In this section we prove a result that will easily imply the desired lower bounds in Theorem \ref{thm-Gelfand-widths-of-Schatten-classes}.
It is a matrix version of \cite[Thm. 2.1]{Foucart-Pajor-Rauhut-Ullrich} and, just like in their result, we note that the restriction $q \le 2$ is not imposed here.

\begin{proposition}\label{prop-lower-bounds}
	For $0 < p \le 1$ and $p < q \le \infty$, there exists a constant $c_{p,q}>0$ such that
	$$
	d^m(B^N_{S_p}, S_q^N) \ge c_{p,q} \min\left\{1, \frac{N}{m}\right\}^{1/p-1/q}
	$$
\end{proposition}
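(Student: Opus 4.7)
I would prove this lower bound by locating a low-rank matrix inside any codimension-$m$ subspace $L$ of $M_N$ and then applying the power-mean inequality to its singular values to control the ratio $\n{X}_{S_q}/\n{X}_{S_p}$ from below. Fix $r := \lceil (m+1)/N \rceil$ (the implicit restriction $m < N^2$ ensures $r \le N$) and let $W_r \subseteq M_N$ denote the subspace of matrices supported on the first $r$ rows, so that $\dim W_r = rN$ and every matrix in $W_r$ has rank at most $r$.

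First I would observe that $L \cap W_r$ is cut out by at most $m$ linear equations inside $W_r$, hence $\dim(L \cap W_r) \ge rN - m \ge 1$; any non-zero $X_0$ in this intersection has rank at most $r$. Next, the power-mean inequality applied to the at most $r$ non-zero singular values of $X_0$ yields
$$
\n{X_0}_{S_q} \ge r^{1/q - 1/p}\, \n{X_0}_{S_p},
$$
so the rescaling $X := X_0/\n{X_0}_{S_p}$ places $X$ in $L \cap B_{S_p}^N$ with $\n{X}_{S_q} \ge r^{-(1/p-1/q)}$.

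A short arithmetic estimate finishes the argument: $r \le 1 + m/N \le 2\max\{1, m/N\} = 2/\min\{1, N/m\}$, so
$$
r^{-(1/p-1/q)} \ge 2^{-(1/p-1/q)} \min\{1, N/m\}^{1/p-1/q}.
$$
Taking the infimum over all admissible $L$ yields the proposition with $c_{p,q} = 2^{-(1/p-1/q)}$.

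I do not expect a real obstacle. The proof is the matrix analogue of the classical vector fact that a codimension-$m$ subspace of $\R^N$ always contains an $(m+1)$-sparse vector: rank replaces sparsity, and the row-support decomposition of $M_N$ supplies the correct ``rank budget'' $r = \lceil (m+1)/N \rceil$ for free. The power-mean inequality on singular values is valid throughout the full range $0 < p < q \le \infty$ covered by the proposition, so the restriction $q \le 2$ imposed in Theorem \ref{thm-Gelfand-widths-of-Schatten-classes} is genuinely needed only on the upper-bound side, as noted in the remark immediately preceding the statement.
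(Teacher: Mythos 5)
Your proof is correct, and it takes a genuinely different and more elementary route than the paper's. The paper argues by contradiction: if the width were too small, the kernel of the associated measurement map would satisfy the null-space property of Oymak--Mohan--Fazel--Hassibi, so Schatten $p$-minimization would exactly recover all rank-$s$ matrices with $s \asymp \max\{1, m/N\}$, and an information-theoretic counting argument (via Cand\`es--Plan) then forces $m \ge Ns$, a contradiction. That route is chosen to showcase the equivalence between widths and recovery that is the theme of the paper, but it leans on two external results. Your argument is self-contained: the dimension count $\dim(L \cap W_r) \ge \dim L + \dim W_r - N^2 \ge rN - m \ge 1$ produces a matrix of rank at most $r = \lceil (m+1)/N \rceil$ in any subspace $L$ of codimension at most $m$, and H\"older's inequality on its at most $r$ nonzero singular values gives $\n{X_0}_{S_p} \le r^{1/p-1/q} \n{X_0}_{S_q}$, valid for all $0 < p < q \le \infty$. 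This is the exact matrix analogue of the classical fact that a codimension-$m$ subspace of $\R^N$ contains a nonzero $(m+1)$-sparse vector, with the row-supported subspace $W_r$ supplying the rank budget; it yields the explicit (and slightly better) constant $c_{p,q} = 2^{-(1/p-1/q)}$ versus the paper's $(1/2)^{2/p-1/q}$, and it makes transparent why $q \le 2$ is not needed on the lower-bound side. The only caveat, which you already flag, is that one must assume $m < N^2$ so that $r \le N$; this restriction is genuinely necessary for the statement itself, since for $m \ge N^2$ the choice $L = \{0\}$ makes the width zero.
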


\begin{proof}
	With $c = (1/2)^{2/p-1/q}$ and $\mu = \min\{1,N/2m\}$, we are going to prove that
	$$
	d^m(B^N_{S_p}, S_q^N) \ge c \mu^{1/p-1/q}.
	$$
	We proceed by contradiction, assuming that $d^m(B^N_{S_p}, S_q^N) < c \mu^{1/p-1/q}$.
	This implies the existence of a linear map $\mathcal{A} : M_N \to \R_m$ such that for all $V \in \ker(\mathcal{A}) \setminus \{0\}$,
	$$
	\n{V}_{S_q} < c \mu^{1/p-1/q} \n{V}_{S_p}.
	$$
	For a fixed $V \in \ker(\mathcal{A}) \setminus \{0\}$, in view of the inequalities $\n{V}_{S_p} \le N^{1/p-1/q}\n{V}_{S_q}$ and $c \le (1/2)^{1/p-1/q}$, we derive $1 < (\mu N/2)^{1/p-1/q}$, so $1 \le 1/\mu < N/2$.
	We then define $s := \lfloor 1/\mu \rfloor \ge 1$, so $2s < N$ and
	$$
	\frac{1}{2\mu} < s \le \frac{1}{\mu}.
	$$
	Now for $V \in \ker(\mathcal{A}) \setminus \{0\}$, 
	$$
	\n{V_{[2s]}}_{S_p} \le (2s)^{1/p-1/q}\n{V_{[2s]}}_{S_q} \le (2s)^{1/p-1/q}\n{V}_{S_q} < c(2s\mu)^{1/p-1/q}\n{V}_{S_p} \le \frac{1}{2^{1/p}}\n{V}_{S_p}
	$$
	and therefore, using that $\n{V}_{S_p}^p = \n{V_{[2s]}}^p_{S_p} + \n{V-V_{[2s]}}^p_{S_p}$, we conclude
	$$
	\n{V_{[2s]}}^p_{S_p} \le \n{V-V_{[2s]}}^p_{S_p}.
	$$
	This means that $\mathcal{A}$ satisfies the sufficient conditions in \cite[Thm. 3]{Oymak-Mohan-Fazel-Hassibi}, which implies that Schatten $p$-quasinorm minimization gives exact recovery of rank $s$ matrices.
	By well-known arguments (see, for example, the discussion after the statement of theorem 2.3 in \cite{Candes-Plan}), this gives
	$$
	m \ge Ns > N\frac{1}{2\mu} \ge \frac{N}{2}\frac{1}{\mu} \ge  \frac{N}{2}\frac{2m}{N} = m,
	$$
	a blatant contradiction.
	
\end{proof}

\subsection{Upper bounds}

In this subsection we establish a result from which the desired upper bounds in Theorem \ref{thm-Gelfand-widths-of-Schatten-classes} will follow easily.
The proof relies on low-rank matrix recovery methods, and the reader will notice similarities with the proof of Theorem \ref{thm-Foucart-Lai}.
It should be mentioned that the bound for the case $p \ge 1$ follows easily from the result of Carl and Defant \cite{Carl-Defant-tensor-products} mentioned in the introduction together with an interpolation argument, but the bound for the case $p<1$ is new (as far as the authors know).
Our result is a matrix version of \cite[Thm. 3.2]{Foucart-Pajor-Rauhut-Ullrich},
but the essence of the argument can be traced back to Donoho \cite[Thm. 9]{Donoho}.
As in the case of the result of \cite{Foucart-Pajor-Rauhut-Ullrich}, it is interesting to note that even when $p < 1$, an optimal reconstruction map $\Delta$ 
for the realization of the number $E_m(B_{S_{p,\infty}}^N,S_q^N)$ can be chosen to be the $S_1$-minimization mapping, at least when $q \geq  1$.

\begin{theorem}\label{thm-upper-bounds}
	For $0<p < 1$ and $p<q\leq 2$, there exists a linear map $\mathcal{A} : M_N \to \R^m$ such that, with $r = \min\{1,q\}$,
	\[
	\sup_{X \in B_{p,\infty}^N} \n{X - \Delta_r(\mathcal{A}X)}_{S_q} \leq C_{p,q} \min \Big\{1, \frac{N}{m} \Big\}^{1/p-1/q},
	\]
	where $C_{p,q}>0$ is a constant that depends only on $p$ and $q$.
\end{theorem}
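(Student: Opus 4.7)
The plan is to mirror the proof of the vector analogue \cite[Thm.~3.2]{Foucart-Pajor-Rauhut-Ullrich} with Theorem \ref{thm-Foucart-Lai} playing the role of its commutative counterpart. The first step is to dispose of the trivial regime where $\min\{1, N/m\}$ is comparable to $1$: if $m$ is at most a constant multiple of $N$, take $\mathcal{A}$ to be any fixed linear map (e.g.\ the zero map), so that $\n{X - \Delta_r(\mathcal{A}X)}_{S_q} \le \n{X}_{S_q}$, and the desired bound reduces to the elementary singular-value comparison $\n{X}_{S_q} \le C_{p,q} \n{X}_{S_{p,\infty}}$, which holds because $q > p$ makes $\sum_{k \ge 1} k^{-q/p}$ converge.

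In the main regime, I would set $r := \min\{1, q\} \in (0, 1]$ and pick $\delta_0 \in (0,1)$ small enough that $\delta_{2s}(\mathcal{A}) \le \delta_0$ forces $\gamma_{2s}-1 < 4(\sqrt{2}-1)$, which is exactly the hypothesis of Theorem \ref{thm-Foucart-Lai} with exponent $r$ and $t = s$. Choose an integer $s \asymp m/N$ satisfying $m \ge 2 C_{\delta_0}\, s\, N$; Theorem \ref{thm-existence-RIP-matrices} then supplies a map $\mathcal{A} : M_N \to \R^m$ with $\delta_{2s}(\mathcal{A}) \le \delta_0$, and Theorem \ref{thm-Foucart-Lai} (with $\theta = 0$) yields, for every $X \in B_{S_{p,\infty}}^N$ and $X^* := \Delta_r(\mathcal{A}X)$,
\[
\n{X - X^*}_{S_r} \le C_1\, \rho_s(X)_{S_r}, \qquad \n{X - X^*}_{S_2} \le C_2\, s^{1/2 - 1/r}\, \rho_s(X)_{S_r};
\]
combined with the weak-$S_p$ best-approximation estimate \eqref{eqn-trivial-best-appxn-weak-Sp}, this gives $\rho_s(X)_{S_r} \le D_{p,r}\, s^{1/r - 1/p}$.

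When $q \le 1$, $r = q$ and the first inequality already produces $\n{X - X^*}_{S_q} \lesssim s^{1/q - 1/p} \asymp (N/m)^{1/p - 1/q}$. When $1 < q \le 2$, $r = 1$ and the two displayed estimates read $\n{X-X^*}_{S_1} \lesssim s^{1 - 1/p}$ and $\n{X-X^*}_{S_2} \lesssim s^{1/2 - 1/p}$; I would combine them via the Schatten log-convexity inequality $\n{Y}_{S_q} \le \n{Y}_{S_1}^{\theta} \n{Y}_{S_2}^{1-\theta}$ with $\theta = 2/q - 1$ (a direct consequence of H\"older applied to the sequence of singular values) and a brief exponent calculation to again recover $\n{X-X^*}_{S_q} \lesssim s^{1/q - 1/p} \asymp (N/m)^{1/p - 1/q}$. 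I expect the main technical point to be this final interpolation and exponent arithmetic, together with the bookkeeping that ensures every constant depends only on $p$ and $q$ --- which it does, since $\delta_0$ and $C_{\delta_0}$ depend only on $r = \min\{1,q\}$.
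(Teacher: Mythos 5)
Your proposal is correct, and its overall skeleton (case split on $m$ versus $N$, Theorem \ref{thm-existence-RIP-matrices} to produce $\mathcal{A}$, Theorem \ref{thm-Foucart-Lai} with $\theta=0$, and the weak-$S_p$ bound \eqref{eqn-trivial-best-appxn-weak-Sp} on $\rho_s(X)_{S_r}$) matches the paper's. The one place where you genuinely diverge is the passage from the $S_r$ error to the $S_q$ error. The paper does not use the $S_2$ estimate \eqref{eqn-Foucart-Lai-estimate-2} at all; instead it re-runs a block-decomposition/RIP argument on the kernel element $Z = X - \Delta_r(\mathcal{A}X)$ to prove directly that $\n{Z}_{S_q} \lesssim s^{1/q-1/r}\n{Z}_{S_r}$ for all $Z\in\ker\mathcal{A}$ (a null-space-property-type estimate in $S_q$), and then chains this with \eqref{eqn-Foucart-Lai-estimate-1}. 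You instead invoke both conclusions of Theorem \ref{thm-Foucart-Lai} --- with $t=s$, so that the hypothesis \eqref{eqn-thm-Foucart-Lai-hypothesis} reduces to the $r$-independent condition $\gamma_{2s}-1<4(\sqrt{2}-1)$ --- and interpolate between the $S_1$ and $S_2$ error bounds via log-convexity of the Schatten norms, $\n{Y}_{S_q}\le\n{Y}_{S_1}^{\theta}\n{Y}_{S_2}^{1-\theta}$ with $\theta=2/q-1$ (no interpolation being needed when $q\le 1$, since then $r=q$). Your exponent arithmetic checks out: $s^{(1-1/p)\theta+(1/2-1/p)(1-\theta)}=s^{1/q-1/p}\asymp(N/m)^{1/p-1/q}$. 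The trade-off is that your route is shorter and reuses the full strength of Theorem \ref{thm-Foucart-Lai} rather than re-deriving a kernel estimate, at the cost of the (mild) restriction $q\le 2$ needed for the $S_1$--$S_2$ interpolation; the paper's kernel argument is self-contained in $S_q$ and is the same mechanism it needs elsewhere, but is also confined to $q \le 2$ through its use of the RIP, so nothing is lost either way.
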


\begin{proof} 
Let $C$ be the constant in \eqref{mRIP} relative to the RIP 
associated with $\delta=1/3$, say.

{\em Case 1:} $m \ge C N$.\\
We define $s \ge 1$ as the largest integer smaller than $\frac{m}{CN}$, so that
\begin{equation}
\label{DefS}
\frac{m}{2CN}< s \le \frac{m}{CN}.
\end{equation}
Let $t=2s$.
It is then possible to find a linear map $\mathcal{A} : M_N \to \R^{m}$ with $\delta_{t}(A) \le \delta$.
In particular, we have $\delta_{s}(A) \le \delta$.
Now, given $Z := X - \Delta_r(\mathcal{A}X) \in \ker \mathcal{A}$, we decompose $Z$ into matrices $Z_{T_1}, Z_{T_2}, Z_{T_3}, \dotsc$ of rank at most $s$
by taking the $s$ largest singular values of $Z$ for $Z_{T_1}$, then the next $s$ largest ones for $Z_{T_2}$ and so on. 

This easily implies $\big( \n{Z_{T_k}}_{S_2}^2 / s \big)^{1/2} \le \big(  \n{Z_{T_{k-1}}}_{S_r}^r/s \big)^{1/r}$, i.e.,
\begin{equation}
\label{CompSkSk-1}
\n{Z_{T_k}}_{S_2} \le \frac{1}{s^{1/r-1/2}} \n{Z_{T_{k-1}}}_{S_r},
\qquad k \ge 2.
\end{equation}
Using the $r$-triangle inequality, we have
$$
\n{Z}_{S_q}^r =  \n{ \sum_{k \ge 1} Z_{T_k} }_{S_q}^r
\le \sum_{k \ge 1}  \n{Z_{T_k} }_{S_q}^r
\le \sum_{k \ge 1} \big(s^{1/q-1/2} \n{Z_{T_k}}_{S_2} \big)^{r}
\le \sum_{k \ge 1} \Big(\frac{s^{1/q-1/2}}{\sqrt{1-\delta}} \n{\mathcal{A}Z_{T_k}}_{\ell_2}  \Big)^r. 
$$
The fact that $Z \in  \ker \mathcal{A}$ implies $\mathcal{A}Z_{T_1} =  -\sum_{k \ge 2} \mathcal{A} Z_{T_k} $.
It follows that
\begin{eqnarray*}
\n{Z}_{S_q}^r  & \le &
\Big(\frac{s^{1/q-1/2}}{\sqrt{1-\delta}} \Big)^r \Big( \sum_{k \ge 2}  \n{\mathcal{A}Z_{T_k}}_{\ell_2} \Big)^r
+ \Big(\frac{s^{1/q-1/2}}{\sqrt{1-\delta}} \Big)^r \sum_{k \ge 2}  \n{\mathcal{A}Z_{T_k}}_{\ell_2}^r\\
& \le & 2 \Big(\frac{s^{1/q-1/2}}{\sqrt{1-\delta}} \Big)^r \sum_{k \ge 2}  \n{\mathcal{A}Z_{T_k}}_{\ell_2}^r
\le 2 \Big(\sqrt{\frac{1+\delta}{1-\delta}} s^{1/q-1/2}\Big)^r  \sum_{k \ge 2} \n{Z_{T_k}}_{S_2}^r.
\end{eqnarray*}
We then derive, using the inequality \eqref{CompSkSk-1}, 
$$
\n{Z}_{S_q}^r  \le 
2 \Big(\sqrt{\frac{1+\delta}{1-\delta}} \frac{1}{s^{1/r-1/q}}\Big)^{r} \sum_{k \ge 1} \n{Z_{T_k}}_{S_r}^r .
$$
In view of the choice $\delta=1/3$ and of \eqref{DefS}, we deduce
\begin{equation}
\label{Eq1LastPf}
\n{ X-\Delta_r(\mathcal{A}X) }_{S_q} \le 2^{1/r} \sqrt{2} \Big( \frac{2CN}{m} \Big)^{1/r-1/q} \n{ X-\Delta_r(\mathcal{A}X) }_{S_r}\,.
\end{equation}
Moreover, in view of $\delta_{2s} \leq 1/3$ and of Theorems \ref{thm-Foucart-Lai} and \ref{thm-existence-RIP-matrices},
there exists a constant $C_1>0$ such that 
\begin{equation}
\label{Eq2LastPf}
\n{X - \Delta_r(\mathcal{A}X)}_{S_r} \le (C_1)^{1/r} \rho_s(x)_{S_r}.
\end{equation}
Finally, using \eqref{eqn-trivial-best-appxn-weak-Sp} and \eqref{DefS}, we have 
\begin{equation}
\label{Eq3LastPf}
\rho_s(X)_{S_r} \le  \, \frac{D_{p,r}}{s^{1/p-1/r}} 
\le \, D_{p,r}\Big( \frac{2CN}{m} \Big)^{1/p-1/r}\,.
\end{equation}
Putting \eqref{Eq1LastPf}, \eqref{Eq2LastPf}, and \eqref{Eq3LastPf} together,
we obtain, for any $x \in B_{p,\infty}^N$, 
$$
\n{ X-\Delta_r(\mathcal{A}X) }_{S_q} \le 2^{1/r} \sqrt{2} C_1^{1/r} D_{p,r} \Big( \frac{2CN}{m} \Big)^{1/p-1/q}. 
$$

{\em Case 2:} $m \le CN$.\\
We simply choose the map $\mathcal{A}$ as the zero map.
Then, for any $X \in B_{S_{p,\infty}}^N$, we have
$$
\n{X - \Delta_r(\mathcal{A}X)}_{S_q} = \n{X}_{S_q} \le D_{p,q} \n{X}_{p,\infty} \le D_{p,q},
$$
for some constant $D_{p,q}>0$.

This completes the proof.
\end{proof}

\begin{remark}\label{remark-upper-bound-p-equals-one}
	When $p=1$, the same proof but using inequality \eqref{eqn-trivial-best-appxn-Sp} instead of \eqref{eqn-trivial-best-appxn-weak-Sp}
	gives the following: for $1<q\leq 2$, there exists a linear map $\mathcal{A} : M_N \to \R^m$ such that,
	\[
	\sup_{X \in B_{1}^N} \n{X - \Delta_1(\mathcal{A}X)}_{S_q} \leq C_{q} \min \Big\{1, \frac{N}{m} \Big\}^{1-1/q},
	\]
	where $C_{q}>0$ is a constant that depends only on $q$.
\end{remark}

\subsection{Proof of theorem \ref{thm-Gelfand-widths-of-Schatten-classes}}

\begin{proof}
First, an observation.
As in the vector case, the simple inclusion $B^N_{S_p} \subseteq B^N_{S_{p,\infty}}$ implies
$$
d^m(B^N_{S_p}, S_q^N) \le d^m(B^N_{S_{p,\infty}}, S_q^N),
$$
hence it suffices to show lower bounds for $d^m(B^N_{S_p}, S_q^N)$ and upper bounds for $d^m(B^N_{S_{p,\infty}}, S_q^N)$.

The lower bounds follow immediately from Proposition \ref{prop-lower-bounds}.
When $0<p<1$, the upper bounds follow from Theorem \ref{thm-upper-bounds}.
For $p=1$, the upper bound when $1 \le m \le N$ follows from the trivial inequality $\n{X}_{S_q} \le \n{X}_{S_1}$,
whereas when $N \le m \le N^2$ it follows from Remark \ref{remark-upper-bound-p-equals-one}.
\end{proof}

\subsection{Relation to compressive widths}

As promised after the proof of our matrix version of the Kashin-Temlyakov theorem, the relationship between the Banach space geometry of the finite-dimensional Schatten $p$-classes and matrix recovery goes beyond the norm minimization scheme.
Below we use the notation from \cite[Sec. 10.1]{Foucart-Rauhut}:
the quantities $E^m$ and $E^m_{\ada}$ measure the worst-case reconstruction errors of optimal measurement/reconstruction schemes in the nonadaptive and adaptive settings, respectively.

\begin{theorem}\label{theorem-compressive-widths}
	For $0 < p \le 1$ and $p < q \le 2$, if $1 \le m < N^2$ then the adaptive and nonadaptive compressive widths satisfy
	$$
	E^m_{\ada}( B_{S_p^N}, S_q^N ) \asymp_{p,q} E^m( B_{S_p^N}, S_q^N ) \asymp_{p,q} \min\left\{1, \frac{N}{m} \right\}^{1/p-1/q}.
	$$
\end{theorem}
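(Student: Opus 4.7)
The plan is to sandwich both compressive widths between the Gelfand width $d^m(B_{S_p^N},S_q^N)$ -- whose asymptotic behavior is already furnished by Theorem~\ref{thm-Gelfand-widths-of-Schatten-classes} -- and explicit upper bounds produced by Theorem~\ref{thm-upper-bounds}. Since by definition $E^m_{\ada}(B_{S_p^N},S_q^N) \le E^m(B_{S_p^N},S_q^N)$, the task reduces to establishing an upper bound on the nonadaptive quantity and a matching lower bound on the adaptive one, both of order $\min\{1,N/m\}^{1/p-1/q}$.

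For the upper bound on $E^m$, note that $\n{X}_{S_{p,\infty}} \le \n{X}_{S_p}$ gives the embedding $B_{S_p^N} \subseteq B_{S_{p,\infty}^N}$, so the measurement map $\mathcal{A}$ and reconstruction $\Delta_r$ with $r = \min\{1,q\}$ constructed in Theorem~\ref{thm-upper-bounds} for $0<p<1$ immediately yield
$$
E^m(B_{S_p^N},S_q^N) \le \sup_{X \in B_{S_p^N}} \n{X - \Delta_r(\mathcal{A}X)}_{S_q} \le C_{p,q}\min\{1,N/m\}^{1/p-1/q}.
$$
The case $p=1$ splits according to the size of $m$: for $1 \le m \le N$ take $\mathcal{A}=0$ together with the trivial inequality $\n{X}_{S_q} \le \n{X}_{S_1}$, and for $N \le m < N^2$ invoke Remark~\ref{remark-upper-bound-p-equals-one}.

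For the lower bound on $E^m_{\ada}$, fix any adaptive scheme $(\mathcal{A},\Delta)$ with $m$ measurements and let $L_0 \subseteq M_N$ denote the set of matrices whose adaptive measurement sequence coincides with that of the zero matrix. By a straightforward induction on the measurement index -- the $k$th measurement becomes a fixed linear functional once we condition on the first $k-1$ outputs being zero -- $L_0$ is an intersection of $m$ linear hyperplanes, hence a linear subspace of codimension at most $m$. For any $Z \in L_0 \cap B_{S_p^N}$ the symmetry of the Schatten ball gives $-Z \in B_{S_p^N}$, and both $\pm Z$ return the same reconstruction $w := \Delta(\mathcal{A}(0))$; the (quasi-)triangle inequality in $S_q^N$ applied to $(Z-w)-(-Z-w) = 2Z$ then forces
$$
\max\bigl(\n{Z-w}_{S_q},\n{-Z-w}_{S_q}\bigr) \ge c_q \n{Z}_{S_q}
$$
for a positive constant $c_q$ depending only on $q$. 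Maximizing over $Z$ and using that $L_0$ has codimension $\le m$ yields
$$
E^m_{\ada}(B_{S_p^N},S_q^N) \ge c_q \sup_{Z \in L_0 \cap B_{S_p^N}} \n{Z}_{S_q} \ge c_q\, d^m(B_{S_p^N},S_q^N),
$$
which by Theorem~\ref{thm-Gelfand-widths-of-Schatten-classes} produces the required lower bound. The only point needing care is verifying that $L_0$ truly has codimension $\le m$ in the adaptive setting; once that is in place the two-point symmetry argument never uses convexity of $B_{S_p^N}$, so the non-convexity of the Schatten ball for $p<1$ causes no difficulty and everything else collapses into the $\asymp_{p,q}$ constants.
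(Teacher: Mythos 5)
Your proposal is correct, and at its core it is the same argument as the paper's: sandwich the compressive widths between the Gelfand width $d^m(B_{S_p^N},S_q^N)$ and a matching explicit upper bound, then invoke Theorem~\ref{thm-Gelfand-widths-of-Schatten-classes}. The paper simply cites the general inequality $d^m(K,S_q^N) \le E^m_{\ada}(K,S_q^N) \le E^m(K,S_q^N) \le 2^{1/p}\,d^m(K,S_q^N)$ from Foucart--Rauhut (Thm.\ 10.4), valid because $-B_{S_p^N}=B_{S_p^N}$ and $B_{S_p^N}+B_{S_p^N}\subseteq 2^{1/p}B_{S_p^N}$, whereas you unpack the lower half of that citation by hand: the subspace $L_0$ of matrices whose adaptive measurement trajectory agrees with that of $0$ has codimension at most $m$, and the two-point symmetry argument on $\pm Z$ with the $q$-triangle inequality gives $E^m_{\ada} \ge c_q\, d^m$ (with $c_q = 2^{1-1/q}$ when $q<1$). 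For the upper bound you bypass the quasi-convexity inequality entirely and plug in the explicit scheme $(\mathcal{A},\Delta_r)$ of Theorem~\ref{thm-upper-bounds} (and Remark~\ref{remark-upper-bound-p-equals-one} for $p=1$), using $B_{S_p^N}\subseteq B_{S_{p,\infty}^N}$; this is legitimate and in fact reaches the same construction that underlies the paper's bound on $d^m$, just one step more directly. The net effect is a self-contained proof at the cost of a slightly worse (but still $p,q$-dependent) constant in the lower direction; no step is missing.
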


\begin{proof}
	Since $-B_{S_p^N} = B_{S_p^N}$ and $B_{S_p^N}+B_{S_p^N} \subseteq 2^{1/p}B_{S_p^N}$, \cite[Thm. 10.4]{Foucart-Rauhut}
	implies
	$$
	d^m( B_{S_p^N}, S_q^N ) \le E^m_{\ada}( B_{S_p^N}, S_q^N ) \le E^m( B_{S_p^N}, S_q^N ) \le 2^{1/p}d^m( B_{S_p^N}, S_q^N )
	$$
	But now, since $d^m(B_{S_p^N}, S_q^N) = c_{m+1}(id : S_p^N \to S_q^N)$, an appeal to Theorem \ref{thm-Gelfand-widths-of-Schatten-classes}
	finishes the proof. 
\end{proof}

In the $\ell_p$ case the lower estimate is of particular importance in compressed sensing, since it allows one to prove lower bounds for the number of measurements required to stably recover $s$-sparse vectors in $\R^N$.
In the matrix case, that is no longer the case.
Trying it only gives that (under certain conditions), the minimum number of measurements $m$ required to stably recover rank $s$ matrices in $M_N$ is $\ge C Ns$, which is not an improvement over the information-theoretical limit.
The reason behind this is that, unlike in the $\ell_p$ case, there are compressed sensing algorithms (including norm minimization) that give stability with a number of measurements of that order \cite{Candes-Plan}.

\section*{Acknowledgements}

We would like to thank Rachel Ward for suggesting the reference \cite{Foucart-Lai},
and also thank the Workshop in Analysis in Probability at Texas A\&{}M University.
The first author was partially supported by NSF grant DMS-1400588.

\bibliographystyle{plain}

\begin{thebibliography}{10}

\bibitem{Baraniuk-Davenport-DeVore-Wakin}
Richard Baraniuk, Mark Davenport, Ronald DeVore, and Michael Wakin.
\newblock A simple proof of the restricted isometry property for random
  matrices.
\newblock {\em Constr. Approx.}, 28(3):253--263, 2008.

\bibitem{Cai-Zhang}
T.~Tony Cai and Anru Zhang.
\newblock Sparse representation of a polytope and recovery of sparse signals
  and low-rank matrices.
\newblock {\em IEEE Trans. Inform. Theory}, 60(1):122--132, 2014.

\bibitem{Candes-Plan}
Emmanuel~J. Cand{\`e}s and Yaniv Plan.
\newblock Tight oracle inequalities for low-rank matrix recovery from a minimal
  number of noisy random measurements.
\newblock {\em IEEE Trans. Inform. Theory}, 57(4):2342--2359, 2011.

\bibitem{Candes-Recht}
Emmanuel~J. Cand{\`e}s and Benjamin Recht.
\newblock Exact matrix completion via convex optimization.
\newblock {\em Found. Comput. Math.}, 9(6):717--772, 2009.

\bibitem{Candes-Tao-RIP}
Emmanuel~J. Cand{\`e}s and Terence Tao.
\newblock Decoding by linear programming.
\newblock {\em IEEE Trans. Inform. Theory}, 51(12):4203--4215, 2005.

\bibitem{Candes-Tao-NearOptimal}
Emmanuel~J. Candes and Terence Tao.
\newblock Near-optimal signal recovery from random projections: universal
  encoding strategies?
\newblock {\em IEEE Trans. Inform. Theory}, 52(12):5406--5425, 2006.

\bibitem{Candes-Tao}
Emmanuel~J. Cand{\`e}s and Terence Tao.
\newblock The power of convex relaxation: near-optimal matrix completion.
\newblock {\em IEEE Trans. Inform. Theory}, 56(5):2053--2080, 2010.

\bibitem{Carl-Defant-tensor-products}
Bernd Carl and Andreas Defant.
\newblock Asymptotic estimates for approximation quantities of tensor product
  identities.
\newblock {\em J. Approx. Theory}, 88(2):228--256, 1997.

\bibitem{Cohen-Dahmen-DeVore}
Albert Cohen, Wolfgang Dahmen, and Ronald DeVore.
\newblock Compressed sensing and best {$k$}-term approximation.
\newblock {\em J. Amer. Math. Soc.}, 22(1):211--231, 2009.

\bibitem{Donoho}
David~L. Donoho.
\newblock Compressed sensing.
\newblock {\em IEEE Trans. Inform. Theory}, 52(4):1289--1306, 2006.

\bibitem{Dvijotham-Fazel}
K.~Dvijotham and Maryam Fazel.
\newblock A nullspace analysis of the nuclear norm heuristic for rank
  minimization.
\newblock In {\em Proc. of ICASSP 2010}, Dallas, TX, March 2010.

\bibitem{Edmunds-Lang}
David~E. Edmunds and Jan Lang.
\newblock Gelfand numbers and widths.
\newblock {\em J. Approx. Theory}, 166:78--84, 2013.

\bibitem{Fazel-thesis}
Maryam Fazel.
\newblock {\em Matrix rank minimization with applications}.
\newblock PhD thesis, Stanford University, 2002.

\bibitem{Fornasier-Rauhut-Ward}
Massimo Fornasier, Holger Rauhut, and Rachel Ward.
\newblock Low-rank matrix recovery via iteratively reweighted least squares
  minimization.
\newblock {\em SIAM J. Optim.}, 21(4):1614--1640, 2011.

\bibitem{Foucart-Lai}
Simon Foucart and Ming-Jun Lai.
\newblock Sparsest solutions of underdetermined linear systems via
  {$l_q$}-minimization for {$0<q\leq 1$}.
\newblock {\em Appl. Comput. Harmon. Anal.}, 26(3):395--407, 2009.

\bibitem{Foucart-Pajor-Rauhut-Ullrich}
Simon Foucart, Alain Pajor, Holger Rauhut, and Tino Ullrich.
\newblock The {G}elfand widths of {$\ell_p$}-balls for {$0<p\leq 1$}.
\newblock {\em J. Complexity}, 26(6):629--640, 2010.

\bibitem{Foucart-Rauhut}
Simon Foucart and Holger Rauhut.
\newblock {\em A mathematical introduction to compressive sensing}.
\newblock Applied and Numerical Harmonic Analysis. Birkh\"auser/Springer, New
  York, 2013.

\bibitem{Garnaev-Gluskin}
A.~Yu. Garnaev and E.~D. Gluskin.
\newblock The widths of a {E}uclidean ball.
\newblock {\em Dokl. Akad. Nauk SSSR}, 277(5):1048--1052, 1984.

\bibitem{Kashin}
B.~S. Kashin.
\newblock The widths of certain finite-dimensional sets and classes of smooth
  functions.
\newblock {\em Izv. Akad. Nauk SSSR Ser. Mat.}, 41(2):334--351, 478, 1977.

\bibitem{Kashin-Temlyakov}
B.~S. Kashin and V.~N. Temlyakov.
\newblock A remark on the problem of compressed sensing.
\newblock {\em Mat. Zametki}, 82(6):829--837, 2007.

\bibitem{Kong-Xiu}
Lingchen Kong and Naihua Xiu.
\newblock Exact low-rank matrix recovery via nonconvex schatten
  {$p$}-minimization.
\newblock {\em Asia-Pacific Journal of Operational Research}, pages
  1340010--1---1340010--13, 2013.

\bibitem{Konig-eigenvalues}
Hermann K{\"o}nig.
\newblock Eigenvalues of operators and applications.
\newblock In {\em Handbook of the geometry of {B}anach spaces, {V}ol. {I}},
  pages 941--974. North-Holland, Amsterdam, 2001.

\bibitem{Lee-Bresler-nuclear}
Kiryung Lee and Yoram Bresler.
\newblock Guaranteed minimum rank approximation from linear observations by
  nuclear norm minimization with an ellipsoidal constraint.
\newblock {\em ArXiv preprint}, page arXiv:0903.4742.

\bibitem{Lee-Bresler-admira}
Kiryung Lee and Yoram Bresler.
\newblock A{DM}i{RA}: atomic decomposition for minimum rank approximation.
\newblock {\em IEEE Trans. Inform. Theory}, 56(9):4402--4416, 2010.

\bibitem{Liu-Huang-Chen}
Lu~Liu, Wei Huang, and Di-Rong Chen.
\newblock Exact minimum rank approximation via {S}chatten {$p$}-norm
  minimization.
\newblock {\em J. Comput. Appl. Math.}, 267:218--227, 2014.

\bibitem{Mendelson-Pajor-Tomczak}
Shahar Mendelson, Alain Pajor, and Nicole Tomczak-Jaegermann.
\newblock Uniform uncertainty principle for {B}ernoulli and subgaussian
  ensembles.
\newblock {\em Constr. Approx.}, 28(3):277--289, 2008.

\bibitem{Natarajan}
B.~K. Natarajan.
\newblock Sparse approximate solutions to linear systems.
\newblock {\em SIAM J. Comput.}, 24(2):227--234, 1995.

\bibitem{Novak}
Erich Novak.
\newblock Optimal recovery and {$n$}-widths for convex classes of functions.
\newblock {\em J. Approx. Theory}, 80(3):390--408, 1995.

\bibitem{Oymak-Mohan-Fazel-Hassibi}
Samet Oymak, Karthik Mohan, Maryam Fazel, and Babak Hassibi.
\newblock A simplified approach to recovery conditions for low rank matrices.
\newblock In {\em 2011 IEEE International Symposium on Information Theory
  Proceedings}, pages 2318--2322, Piscataway, NJ, 2011. IEEE.

\bibitem{Pietsch-s-numbers}
Albrecht Pietsch.
\newblock {\em Eigenvalues and {$s$}-numbers}, volume~13 of {\em Cambridge
  Studies in Advanced Mathematics}.
\newblock Cambridge University Press, Cambridge, 1987.

\bibitem{Pietsch-history}
Albrecht Pietsch.
\newblock {\em History of {B}anach spaces and linear operators}.
\newblock Birkh\"auser Boston, Inc., Boston, MA, 2007.

\bibitem{Recht-Fazel-Parrilo}
Benjamin Recht, Maryam Fazel, and Pablo~A. Parrilo.
\newblock Guaranteed minimum-rank solutions of linear matrix equations via
  nuclear norm minimization.
\newblock {\em SIAM Rev.}, 52(3):471--501, 2010.

\bibitem{Recht-Xu-Hassibi}
Benjamin Recht, Weiyu Xu, and Babak Hassibi.
\newblock Null space conditions and thresholds for rank minimization.
\newblock {\em Math. Program.}, 127(1, Ser. B):175--202, 2011.

\end{thebibliography}

\end{document}